\documentclass{amsart}

\usepackage{amsmath,amsfonts,amscd,amsthm}
\usepackage[utf8]{inputenc}
\usepackage{float}
\usepackage{todonotes}
\usepackage{tkz-graph}
\usepackage[pagebackref,bookmarksopen]{hyperref}
\hypersetup{
	colorlinks=true,
	linkcolor=blue,
	citecolor=magenta,
	urlcolor=cyan,
}
\newcommand{\C}{\mathbb{C}} % amsfonts

\newcommand{\matr}[4]{\ensuremath{{\begin{pmatrix}#1&#2\\#3&#4\end{pmatrix}}}}
\newcommand{\smatr}[4]{\ensuremath{\bigl(\begin{smallmatrix}
      #1&#2\\ #3&#4
      \end{smallmatrix}\bigr)}}
\newcommand{\Pj}[1]{\ensuremath{\mathrm{P}_{#1}}}
\newcommand{\mon}[2]{\ensuremath{x^{#1}y^{#2}}}

\DeclareMathOperator{\trace}{Trace}

\newtheorem{theorem}{Theorem}[section]

\newtheorem{corollary}[theorem]{Corollary}
\newtheorem{proposition}[theorem]{Proposition}
\theoremstyle{remark}
\newtheorem{remark}[theorem]{Remark}

\begin{document}

\title[Powers and trace of symmetric powers of $2\times 2$ matrices]{Powers and trace of symmetric powers of $2\times 2$ matrices and combinatorial, Fibonacci and Lucas identities}
\author[J.~L.~Cisneros-Molina]{Jos\'e Luis Cisneros-Molina}
\address{Instituto de Matem\'aticas, Unidad Cuernavaca, Universidad Nacional Aut\'onoma
de M\'exico, Avenida Universidad s/n, Colonia Lomas de Chamilpa, Cuernavaca,
Morelos, Mexico}
\curraddr{}
\email{jlcisneros@im.unam.mx}
\thanks{}

\begin{abstract}
Let $A$ be an arbitrary $2\times 2$ matrix. In \cite{Cisneros:PhD,Cisneros:I2x2M} I gave a formula for the trace of the $k$-th symmetric power of $A$ in terms of the anti-diagonal entries of $A^{k+1}$ and $A$.
This was based on formulae that I found for the entries of the $k$-th power $A^k$ of the matrix $A$ in terms of its entries but I only sketched the idea of how I obtained such formulae.
In this article I give the full proof of those formulae by counting some walks of length $k$ over the complete digraph of order $2$. I compare them with formulae for $A^k$ given by Mc Laughlin in \cite{McLaughlin:CIDnP2x2M} and by Williams in \cite{Williams:nthP2x2M}. This leads to combinatorial identities, in particular expressions for Fibonacci and Lucas numbers.
\end{abstract}

\maketitle

%%%%%%%%%  SECTION

\section{Introduction}

Let $A=\smatr{a}{b}{c}{d}$ be a $2\times2$ matrix with complex entries and denote its $k$-th power by $A^{k}=\smatr{a_{k}}{b_{k}}{c_{k}}{d_{k}}$.
The matrix $A$ acts naturally by matrix multiplication on $\C^2$ giving a linear map $A\colon\C^2\to\C^2$.
The $k$-th symmetric power $S^k \C^2$ is isomorphic to the vector space of homogeneous polynomials of degree $k$ in two variables $x$ and $y$.
The matrix $A$ induces a linear map $S^kA\colon S^k \C^2\to S^k \C^2$ (see Section~\ref{sec:trace}).
In 1998 in my PhD thesis \cite[Proposition~4.22]{Cisneros:PhD} I proved as a collateral result that, if $b\neq0$ and $c\neq0$, the trace of the linear map $S^kA$ is given by (see Theorem~\ref{thm:fk.sk})
\begin{equation}\label{eq:TSk.bkb}
 \trace S^kA=b_{k+1}/b=c_{k+1}/c.
\end{equation}
This implies that the quotients $b_{k+1}/b=c_{k+1}/c$ are invariant under conjugation of $A$.
I proved this result giving explicit closed formulae for $\trace S^kA$ and the entries of $A^k$ in terms of the entries of $A$ (\cite[Lemma~4.20 \& Lemma~4.21]{Cisneros:PhD} see Lemma~\ref{lem:charma} and Theorem~\ref{lem:power}) and comparing them. After showing the result, several people gave me three more different proofs which I publish in 2005  in \cite{Cisneros:I2x2M}.
Being my original proof a pedestrian one compared with the other three more conceptual proofs, I gave the formulae for $A^k$ without proof.
Of course the veracity of such formulae can be proved by induction (this was done in the bachelor thesis \cite[Lema~4.2]{Diaz:IM2x2}), but this does not explain how the formulae were obtained.

Unaware of my article \cite{Cisneros:I2x2M}, Larcombe proved\footnote{Curiously he actually also gives four proofs. See also \cite{Larcombe-Fennessey:ADRITNP,Larcombe-Fennessey:SGTPMADRI}} in \cite[Theorem~1.1]{Larcombe:IGMADRIMP} that
%\begin{theorem}[{\cite[Theorem~1.1]{Larcombe:IGMADRIMP}}]
given a matrix $A=\smatr{a}{b}{c}{d}$ with $b\neq0$ and $c\neq0$ and denoting
$A^{k}=\smatr{a_{k}}{b_{k}}{c_{k}}{d_{k}}$ as before, one has
\begin{equation}\label{thm:larcombe.b/c}
b_k/c_k=b/c,\quad\text{for any $k\geq1$.}
\end{equation}
%\end{theorem}
However, $b_k/c_k=b/c$ is not invariant under conjugation of $A$.
This is an immediate corollary of \eqref{eq:TSk.bkb} which I had not noticed until Prof.~Larcombe pointed it out to me in a private communication in 2017. He also mentioned that \eqref{thm:larcombe.b/c} also follows from a formula for the $k$-th power of a $2\times2$ matrix given by  Mc Laughlin in \cite{McLaughlin:CIDnP2x2M}.
Prof.~Larcombe was also interested in my formulae for $A^k$, he and a former PhD student of his checked them using computer software for powers up to and beyond $500$
(see \cite[Appendix~B]{Larcombe-Fennessey:TRIP2M}), and encouraged me to publish them.

In the present article I finally give the full proof to obtain the formulae for $A^k$ in terms of the entries of $A$ given in \cite{Cisneros:PhD,Cisneros:I2x2M}.
This is done in Section~\ref{sec:An} where I also present the formulae for $A^k$ given by Mc Laughlin in \cite{McLaughlin:CIDnP2x2M} and by Williams in \cite{Williams:nthP2x2M}.
In Section~\ref{sec:trace} using \eqref{eq:TSk.bkb} we express the entries of $A^k$ in terms of the entries of $A$ and $\trace S^k$ and $\trace S^{k-1}$ (see Theorem~\ref{thm:Ak.trSk}). This gives the connection with Mc Laughlin's formulae. In Section~\ref{sec:identities} we obtain several combinatorial identities from the proof of Theorem~\ref{lem:power} and comparing the different formulae presented for $A^k$.
As an application we obtain (I hope) new formulae for Fibonacci and Lucas numbers together with some known ones.

\section{The $n$-th power of a $2\times2$ matrix}\label{sec:An}

In this section, given a $2\times2$ matrix $A$ we derive the explicit
formulae for the $n$-th power $A^n$ of $A$ in terms of its entries given in \cite{Cisneros:PhD,Cisneros:I2x2M}.
This is done using an idea of Felipe González of counting
walks of length $n$ over the complete digraph of order $2$ (Figure~\ref{fig}).
We also give the formulae for $A^k$ given by Mc Laughlin in \cite{McLaughlin:CIDnP2x2M} and by Williams in \cite{Williams:nthP2x2M} in order to compare them in the following sections.

\begin{theorem}\label{lem:power}
Consider the matrix $A=\smatr{a}{b}{c}{d}$. Denote by $a_{n}$,
$b_{n}$, $c_{n}$ and $d_{n}$ the corresponding entries of the matrix
$A^{n}$, i.e. $A^{n}=\smatr{a_{n}}{b_{n}}{c_{n}}{d_{n}}$. Then
\begin{align*}
a_{n}&=a^{n}+\sum_{s=1}^{\lfloor\frac{n}{2}\rfloor}\sum_{m=0}^{n-2s}\binom{n-s-m}{s}\binom{m+s-1}{m}a^{n-2s-m}b^{s}c^{s}d^{m}\\
b_{n}&=\sum_{s=0}^{\lfloor\frac{n-1}{2}\rfloor}\sum_{m=0}^{n-2s-1}\binom{n-s-m-1}{s}\binom{m+s}{m} a^{n-2s-m-1}b^{s+1}c^{s}d^{m}\\
c_{n}&=\sum_{s=0}^{\lfloor\frac{n-1}{2}\rfloor}\sum_{m=0}^{n-2s-1}\binom{n-s-m-1}{s}\binom{m+s}{m} a^{n-2s-m-1}b^{s}c^{s+1}d^{m}\\
d_{n}&=d^{n}+\sum_{s=1}^{\lfloor\frac{n}{2}\rfloor}\sum_{m=0}^{n-2s}\binom{n-s-m-1}{s-1}\binom{m+s}{m} a^{n-2s-m}b^{s}c^{s}d^{m}
\end{align*}
where $\lfloor x\rfloor$ denotes the floor function of $x$.
\end{theorem}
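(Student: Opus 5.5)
The plan is to read $A$ as the weighted adjacency matrix of the complete digraph on two vertices $1,2$ with loops. The loop at $1$ has weight $a$, the edge $1\to2$ has weight $b$, the edge $2\to1$ has weight $c$, and the loop at $2$ has weight $d$. Expanding the matrix product gives
\[
(A^n)_{ij}=\sum_{i=i_0,i_1,\dots,i_n=j} A_{i_0i_1}A_{i_1i_2}\cdots A_{i_{n-1}i_n}.
\]
So $(A^n)_{ij}$ is the sum, over all walks of length $n$ from $i$ to $j$, of the product of the edge weights. Each walk contributes a monomial $a^{\alpha}b^{\beta}c^{\gamma}d^{\delta}$ with $\alpha+\beta+\gamma+\delta=n$. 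Since the walk alternates between the two vertices when it crosses, the crossing counts satisfy:
\begin{itemize}
\item $\beta=\gamma=s$ for walks $1\to1$ and $2\to2$;
\item $\beta=s+1,\ \gamma=s$ for walks $1\to2$;
\item $\beta=s,\ \gamma=s+1$ for walks $2\to1$.
\end{itemize}
Writing $m=\delta$ for the number of $d$-loops, the exponent of $a$ is forced to be the remaining number. This already explains the shape of every monomial in the statement. It remains to count the walks with prescribed $s$ and $m$.

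\textbf{Entry $a_n$.} A walk from $1$ to $1$ with $s\ge1$ round trips consists of:
\begin{itemize}
\item $s+1$ maximal stays at vertex $1$, each possibly empty, filled with $a$-loops;
\item $s$ stays at vertex $2$, also possibly empty, filled with $d$-loops.
\end{itemize}
Distributing $m$ $d$-loops among $s$ stays gives $\binom{m+s-1}{m}$ choices. Distributing the $n-2s-m$ $a$-loops among $s+1$ stays gives $\binom{n-2s-m+s}{s}=\binom{n-s-m}{s}$ choices. The term $s=0$ is the single walk $a^n$, and $s$ ranges up to $\lfloor n/2\rfloor$. This reproduces the formula for $a_n$.

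\textbf{Entry $b_n$.} A walk from $1$ to $2$ with $s+1$ edges $1\to2$ and $s$ edges $2\to1$ has $s+1$ stays at vertex $1$ and $s+1$ stays at vertex $2$. The $m$ $d$-loops can be placed in $\binom{m+s}{m}$ ways. The $n-2s-1-m$ $a$-loops can be placed in $\binom{n-2s-1-m+s}{s}=\binom{n-s-m-1}{s}$ ways. The bounds are $0\le m\le n-2s-1$ and $s\le\lfloor (n-1)/2\rfloor$.

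\textbf{Entry $c_n$.} The count is identical, with the roles of $b$ and $c$ swapped.

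\textbf{Entry $d_n$.} A walk from $2$ to $2$ with $s\ge1$ round trips has $s+1$ stays at vertex $2$ and $s$ stays at vertex $1$. This gives $\binom{m+s}{m}\binom{n-2s-m+s-1}{s-1}$ walks, matching $\binom{n-s-m-1}{s-1}$. The term $s=0$ is the single walk $d^n$.

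The main point needing care is the stars-and-bars bookkeeping, together with the edge conventions where a binomial coefficient might vanish or be ill-defined. For example, $\binom{m+s-1}{m}$ at $s\ge1$ and $m=0$ must equal $1$, and the binomials should be consistent at the extreme values of $m$. I would check that the ranges of summation coincide exactly with the nonnegativity of the number of $a$-loops. Apart from that, the argument is a direct bijection between walks and the choices of compositions, so no induction is needed. As a sanity check I would compare the result with $A^2$.
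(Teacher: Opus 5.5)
Your proposal is correct and takes essentially the paper's route. Both arguments identify the entries of $A^n$ with weighted walks on the complete digraph of order $2$, and both count the walks with prescribed $(s,m)$ by a stars-and-bars distribution of the $a$- and $d$-loops among the stays at the two vertices. The paper phrases this count as inserting the pairs of $b$'s and $c$'s into a row of $a$'s or $d$'s. The only cosmetic difference is that you get the walk expansion directly by expanding the matrix product, whereas the paper derives it from the recurrences $a_n=aa_{n-1}+cb_{n-1}$, etc. The paper's version traverses walks in reverse and so labels the arcs $b$ and $c$ the other way round, which does not affect the counts.
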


\begin{proof}
The entries $a_n$, $b_n$, $c_n$ and $d_n$ of $A^n$ are polynomials of
degree $n$ on $a$, $b$, $c$ and $d$. 
Since $A^{n}=A^{n-1}A=\smatr{a_{n-1}}{b_{n-1}}{c_{n-1}}{d_{n-1}}\smatr{a}{b}{c}{d}$
one gets the recursive equations
\begin{align}
a_{n}&=aa_{n-1}+cb_{n-1}\label{1}\\
b_{n}&=ba_{n-1}+db_{n-1}\label{2}\\
c_{n}&=ac_{n-1}+cd_{n-1}\label{3}\\
d_{n}&=bc_{n-1}+dd_{n-1}\label{4}.
\end{align}
Using these equations, one can find which kind of terms appear in the
entries of $A^{n}$.

\paragraph{\textbf{The entry $a_n$}} Given an arbitrary term in $a_n$, by
(\ref{1}) we have two possibilities, either it is of the form
$aa_{n-1}$ or it is of the form $cb_{n-1}$. On the other hand, given
an arbitrary term in $b_n$, by (\ref{2}) it is either of the form
$ba_{n-1}$ or $db_{n-1}$. We can represent the relation between
equations (\ref{1}) and (\ref{2}) with the digraph in Figure~\ref{fig},
\begin{figure}[H]
\begin{center}
%\renewcommand*{\VertexLightFillColor}{black}
% \begin{tikzpicture}
% \SetGraphUnit{3}
% \Vertex[LabelOut,Lpos=90,L=$1$]{A}
% \EA[LabelOut,Lpos=90,L=$2$](A){B}
% \Edge[style={bend left,->},label=$c$](A)(B)
% \Edge[style={bend left,->},labelstyle={},label=$b$](B)(A)
% \Loop[dist=3cm,dir=WE,style={thick},label=$a$](A)
% \Loop[dist=3cm,dir=EA,style={thick},label=$d$](B)
% \end{tikzpicture}
\begin{tikzpicture}
\useasboundingbox (-2.5,-2) rectangle (7.5,2);
\grProb{1}{2}{$c$}{$b$}{$a$}{$d$}
\end{tikzpicture}
\end{center}
\caption{Complete digraph of order $2$.}\label{fig}
\end{figure}
\noindent where vertex 1 corresponds to equation (\ref{1}) (terms in $a_k$),
vertex 2 corresponds to equation (\ref{2}) (terms in $b_k$) and the
labels on the arcs indicate the factor we are factorising from the
term in $a_k$ (vertex 1) or $b_k$ (vertex 2) as we explain in the
following algorithm to find the form of an arbitrary term in $a_n$.
\begin{enumerate}\setcounter{enumi}{-1}
\item Take an arbitrary term $w$ in $a_n$. Go to \ref{s1} with $r=n$.
\item \label{s1}Term in $a_r$, (in vertex 1). We have two
possibilities:
\begin{enumerate}
\item Factorise an $a$ and get a term in $a_{r-1}$, (follow loop
$a$ at vertex 1). Go back to \ref{s1}.
\item Factorise a $c$ and get a term in $b_{r-1}$, (follow arc $c$
from vertex 1 to vertex 2). Go to \ref{s2}.
\end{enumerate}
\item \label{s2}Term in $b_r$, (in vertex 2). We have two
possibilities:
\begin{enumerate}
\item Factorise a $b$ and get a term in $a_{r-1}$, (follow arc $b$
from vertex 2 to vertex 1). Go to \ref{s1}.
\item Factorise a $d$ and get a term in $b_{r-1}$, (follow loop
$d$ at vertex 2). Go back to \ref{s2}.
\end{enumerate}
\end{enumerate}
Following the algorithm we shall reach a step when we get a term in
$a_1$ or $b_1$.
\begin{itemize}
\item If we get a term in $a_1$, we are at vertex 1 and since $a_1=a$,
factorising this last $a$ we finish again at vertex 1.
\item If we get a term in $b_1$, we are at vertex 2 and since $b_1=b$,
factorising this last $b$ we finish at vertex 1.
\end{itemize}
In both cases we finish at vertex 1. Since we started with an
arbitrary term $w$ in $a_n$ (vertex 1), the term $w$ corresponds to a
closed walk of length $n$ in the digraph. Hence we have the same
number of $b$'s and $c$'s and therefore the terms in $a_n$ have the
form
\begin{equation*}
a^rb^sc^sd^m,
\end{equation*}
with $r+2s+m=n$. The term $a^rb^sc^sd^m$ corresponds to closed walks
of length $n$ 
on the digraph starting and finishing at vertex 1 passing $r$ times
through the loop $a$, $s$ times through each of the arcs $b$ and $c$
and $m$ times through the loop $d$. Note that if $m\geq1$, then
$s\geq1$. Denote by $K_{(r,s,m)}$ the number of the different walks of
length $n$ with these characteristics, that is, which correspond to
the term $a^rb^sc^sd^m$. The number $K_{(r,s,m)}$ is the number of
times that such term appears in $a_n$, i.e. it is the coefficient of
$a^rb^sc^sd^m$ in $a_n$. Therefore, we have that $a_n$ has the form
\begin{equation*}
a_n=\sum_{(r,s,m)} K_{(r,s,m)}a^rb^sc^sd^m,
\end{equation*}
where $(r,s,m)$ runs over all the triples of non-negative integers
such that $r+2s+m=n$ and if $s=0$ then $m=0$.

Now we compute the value of $K_{(r,s,m)}$. Suppose that
$s\geq1$, because if $s=0$, then $m=0$ and we have the term
$a^n$ which appears just one time. Firstly, we put the $r$ $a$'s in a row:
\begin{equation*}
\underbrace{\quad a\quad a\quad a\quad\dots\quad a\quad a\quad}_{r} 
\end{equation*}
we can put the $s$ pairs of $b$'s and $c$'s in $r+1$ places. We can do
this in $\binom{r+s}{s}$ different ways. Now, the only place where we
can put the $d$'s is between a $b$ and a $c$, so we have $s$ places to
put $m$ $d$'s. This can be done in $\binom{m+s-1}{m}$ different
ways. Therefore we have that
$K_{(r,s,m)}=\binom{r+s}{s}\binom{m+s-1}{m}$. 
Finally express $r$ in terms of $s$ and $m$ as $r=n-2s-m$ and note
that $1\leq s \leq \lfloor\frac{n}{2}\rfloor$ and $0\leq m\leq n-2s$. Adding all
the different terms with their corresponding 
coefficients we obtain the formula for $a_n$.
\vspace{12pt}

\paragraph{\textbf{The entry $b_n$}} Given an arbitrary term in $b_n$, by (\ref{2}) it is either of the form $ba_{n-1}$ or $db_{n-1}$.
On the other hand, given an arbitrary term in $a_n$, by (\ref{1}) we have two possibilities, either it is of the form $aa_{n-1}$ or it is of the form $cb_{n-1}$.
Using the digraph in Figure~\ref{fig} we use the following algorithm.
\begin{enumerate}\setcounter{enumi}{-1}
\item Take an arbitrary term $w$ in $b_n$. Go to \ref{sb1} with $r=n$.
\item \label{sb1}Term in $b_r$, (in vertex 2). We have two
possibilities:
\begin{enumerate}
\item Factorise a $d$ and get a term in $b_{r-1}$, (follow loop
$d$ at vertex 2). Go back to \ref{sb1}.
\item Factorise a $b$ and get a term in $a_{r-1}$, (follow arc $b$
from vertex 2 to vertex 1). Go to \ref{sb2}.
\end{enumerate}
\item \label{sb2}Term in $a_r$, (in vertex 1). We have two
possibilities:
\begin{enumerate}
\item Factorise an $a$ and get a term in $a_{r-1}$, (follow loop
$a$ at vertex 1). Go back to \ref{sb2} .
\item Factorise a $c$ and get a term in $b_{r-1}$, (follow arc $c$
from vertex 1 to vertex 2). Go to \ref{sb1}.
\end{enumerate}
\end{enumerate}
Following the algorithm we shall reach a step when we get a term in
$a_1$ or $b_1$.
\begin{itemize}
\item If we get a term in $a_1$, we are at vertex 1 and since $a_1=a$,
factorising this last $a$ we finish again at vertex 1.
\item If we get a term in $b_1$, we are at vertex 2 and since $b_1=b$,
factorising this last $b$ we finish at vertex 1.
\end{itemize}
In both cases we finish at vertex 1. Since we started with an
arbitrary term $w$ in $b_n$ (vertex 2), the term $w$ corresponds to a
walk of length $n$ from vertex 2 to vertex 1 in the digraph. Hence we have one more $b$ than the number of $c$'s and therefore the terms in $b_n$ have the form
\begin{equation*}
a^rb^{s+1}c^sd^m,
\end{equation*}
with $r+2s+m+1=n$. The term $a^rb^{s+1}c^sd^m$ corresponds to walks
of length $n$ on the digraph starting at vertex 2 and finishing at vertex 1 passing $r$ times
through the loop $a$, $s+1$ times through the arc $b$, $s$ times through the arc $c$ and $m$ times through the loop $d$.
Denote by $K_{(r,s,m)}$ the number of the different walks of
length $n$ with these characteristics, that is, which correspond to
the term $a^rb^{s+1}c^sd^m$. The number $K_{(r,s,m)}$ is the number of
times that such term appears in $b_n$, i.e. it is the coefficient of
$a^rb^{s+1}c^sd^m$ in $b_n$. Therefore, we have that $b_n$ has the form
\begin{equation*}
b_n=\sum_{(r,s,m)} K_{(r,s,m)}a^rb^{s+1}c^sd^m,
\end{equation*}
where $(r,s,m)$ runs over all the triples of non-negative integers
such that $r+2s+m+1=n$.

Now we compute the value of $K_{(r,s,m)}$. Firstly, we put the $m$ $d$'s in a row:
\begin{equation*}
\underbrace{\quad d\quad d\quad d\quad\dots\quad d\quad d\quad}_{m}
\end{equation*}
we can put the $s$ pairs of $b$'s and $c$'s in $m+1$ places. We can do
this in $\binom{m+s}{s}$ different ways. Now, the only place where we
can put the $a$'s is after the $b$'s, so we have $s+1$ places to
put $r$ $a$'s. This can be done in $\binom{r+s}{s}$ different
ways. Therefore we have that
$K_{(r,s,m)}=\binom{r+s}{s}\binom{m+s}{m}$.
Finally express $r$ in terms of $s$ and $m$ as $r=n-2s-m-1$ and note
that if $s=0$ we still have one $b$ to go from vertex 2 to vertex 1, and since $2s=n-1-r-m$, the maximum value of $s$ is when $r=m=0$,
so $0\leq s\leq \lfloor\frac{n-1}{2}\rfloor$ and given $s$ the maximum number of $d$'s is $n-2s-1$, so $0\leq m\leq n-2s-1$. Adding all
the different terms with their corresponding coefficients we obtain the formula for $b_n$.

For the entries $c_n$ and $d_n$, in the digraph of Figure~\ref{fig} vertex 1 corresponds to equation \eqref{3} and vertex 2 to equation \eqref{4}.
\vspace{12pt}

\paragraph{\textbf{The entry $c_n$}} It is analogous to the entry $b_n$ but this time counting walks of length $n$ from vertex 1 to vertex 2 in the digraph.
\vspace{12pt}

\paragraph{\textbf{The entry $d_n$}} Given an arbitrary term in $d_n$, by
(\ref{1}) we have two possibilities, either it is of the form
$dd_{n-1}$ or it is of the form $bc_{n-1}$. On the other hand, given
an arbitrary term in $c_n$, by (\ref{2}) it is either of the form
$cd_{n-1}$ or $ac_{n-1}$.
Using the digraph in Figure~\ref{fig} we use the following algorithm.
\begin{enumerate}\setcounter{enumi}{-1}
\item Take an arbitrary term $w$ in $d_n$. Go to \ref{s1} with $r=n$.
\item \label{s1}Term in $d_r$, (in vertex 2). We have two
possibilities:
\begin{enumerate}
\item Factorise an $d$ and get a term in $d_{r-1}$, (follow loop
$d$ at vertex 2). Go back to \ref{s1}.
\item Factorise a $b$ and get a term in $c_{r-1}$, (follow arc $b$
from vertex 2 to vertex 1). Go to \ref{s2}.
\end{enumerate}
\item \label{s2}Term in $c_r$, (in vertex 2). We have two
possibilities:
\begin{enumerate}
\item Factorise a $c$ and get a term in $d_{r-1}$, (follow arc $c$
from vertex 1 to vertex 2). Go to \ref{s1}.
\item Factorise a $a$ and get a term in $c_{r-1}$, (follow loop
$a$ at vertex 1). Go back to \ref{s2}.
\end{enumerate}
\end{enumerate}
Following the algorithm we shall reach a step when we get a term in
$d_1$ or $c_1$.
\begin{itemize}
\item If we get a term in $d_1$, we are at vertex 2 and since $d_1=d$,
factorising this last $d$ we finish again at vertex 2.
\item If we get a term in $c_1$, we are at vertex 1 and since $c_1=c$,
factorising this last $c$ we finish at vertex 2.
\end{itemize}
In both cases we finish at vertex 2. Since we started with an
arbitrary term $w$ in $d_n$ (vertex 2), the term $w$ corresponds to a
closed walk of length $n$ in the digraph. Hence we have the same
number of $b$'s and $c$'s and therefore the terms in $d_n$ have the
form
\begin{equation*}
a^rb^sc^sd^m,
\end{equation*}
with $r+2s+m=n$. The term $a^rb^sc^sd^m$ corresponds to closed walks
of length $n$
on the digraph starting and finishing at vertex 2 passing $r$ times
through the loop $a$, $s$ times through each of the arcs $b$ and $c$
and $m$ times through the loop $d$. Note that if $r\geq1$, then
$s\geq1$. Denote by $K_{(r,s,m)}$ the number of the different walks of
length $n$ with these characteristics, that is, which correspond to
the term $a^rb^sc^sd^m$. The number $K_{(r,s,m)}$ is the number of
times that such term appears in $d_n$, i.e. it is the coefficient of
$a^rb^sc^sd^m$ in $d_n$. Therefore, we have that $d_n$ has the form
\begin{equation*}
d_n=\sum_{(r,s,m)} K_{(r,s,m)}a^rb^sc^sd^m,
\end{equation*}
where $(r,s,m)$ runs over all the triples of non-negative integers
such that $r+2s+m=n$ and if $s=0$ then $r=0$.

Now we compute the value of $K_{(r,s,m)}$. Suppose that
$s\geq1$, because if $s=0$, then $r=0$ and we have the term
$d^n$ which appears just one time. Firstly, we put the $m$ $d$'s in a row:
\begin{equation*}
\underbrace{\quad d\quad d\quad d\quad\dots\quad d\quad d\quad}_{m}
\end{equation*}
we can put the $s$ pairs of $b$'s and $c$'s in $m+1$ places. We can do
this in $\binom{m+s}{m}$ different ways. Now, the only place where we
can put the $a$'s is between a $b$ and a $c$, so we have $s$ places to
put $r$ $a$'s. This can be done in $\binom{r+s-1}{s-1}$ different
ways. Therefore we have that
$K_{(r,s,m)}=\binom{r+s-1}{s-1}\binom{m+s}{m}$.
Finally express $r$ in terms of $s$ and $m$ as $r=n-2s-m$ and note
that $1\leq s \leq \lfloor\frac{n}{2}\rfloor$ and $0\leq m\leq n-2s$. Adding all
the different terms with their corresponding
coefficients we obtain the formula for $d_n$.
%
% The formulae for $c_n$ and $d_n$ are analogous to the
% formulae for $b_n$ and $a_n$ respectively using equations (\ref{3})
% and (\ref{4}) and in the digraph of Figure~\ref{fig} vertex 1 corresponds to equation \eqref{3} and vertex 2 to equation \eqref{4}.
\end{proof}

\begin{remark}\label{rem:a.d}
By the symmetry of matrix multiplication one would expect the coefficients of $a_n$ and $d_n$ have the same form.
They look different because in both cases we are taking the sums with respect to $s$ and $m$,
but in $a_n$ the ``leading'' entry is $a$ and in $d_n$ the ``leading'' entry is $d$. So in order to be able to compare them, in $d_n$
we have to take the sums with respect to $s$ and the exponent of $a$. So using the identity $\binom{n}{k}=\binom{n}{n-k}$ in both binomial coefficients and setting $l=n-2s-m$, we have that $m=n-2s-l$ and making the substitutions, the coefficients of $d_n$ are (compare with \cite[Appendix~B]{Larcombe-Fennessey:TRIP2M})
\begin{equation*}
\tbinom{n-s-m-1}{s-1}\tbinom{m+s}{m}=\tbinom{n-2s-m+s-1}{n-2s-m}\tbinom{m+s}{s}=\tbinom{l+s-1}{l}\tbinom{n-s-l}{s}
\end{equation*}
so now they have the same form as the coefficients of $a_n$ and the new expression for $d_n$ is:
\begin{equation*}
d_{n}=d^{n}+\sum_{s=1}^{\lfloor\frac{n}{2}\rfloor}\sum_{l=0}^{n-2s}
\binom{n-s-l}{s}\binom{l+s-1}{l} a^{l}b^{s}c^{s}d^{n-2s-l}.
\end{equation*}
\end{remark}

\begin{remark}
In 2009 it was published in \cite[Theorem~3.1.2]{Brualdi-Cvetkovic:CAMTIA} the following theorem.
Let $A=(a_{ij})$ be an $m\times m$ matrix, and let $D(A)$ be the complete digraph of order $m$. We assign to the edge from vertex $i$ to vertex $j$ the weight $a_{ij}$.
The \textit{weight of a walk} in $D(A)$ is defined to be the product of the weights of all edges of the walk.
For each positive integer $n$, the entry $a_{i,j}^{(n)}$ of $A^n$ in the $i$-th row and $j$-th column equals the sum of the weights of all walks in $D(A)$ of length $n$ from vertex $i$ to vertex $j$.

Theorem~\ref{lem:power} gives explicit formulae for the sum of the weights of all walks in $D(A)$ of length $n$ from vertex $i$ to vertex $j$ for an arbitrary $2\times 2$ matrix $A$.
The proof of Theorem~\ref{lem:power} gives an explanation of \cite[Theorem~3.1.2]{Brualdi-Cvetkovic:CAMTIA}: each walk of length $n$ from vertex $i$ to vertex $j$ in $D(A)$ corresponds to a monomial in the entry $a_{i,j}^{(n)}$ of $A^n$, and each time we follow an edge in that walk, it means that we can factorize the weight of that edge in the corresponding monomial.

The proof of Theorem~\ref{lem:power} can be, in theory, generalised for $m\times m$ matrices with $m>2$. There are two main problems to get the corresponding formulae: to find the form of the possible monomials appearing in the entries of $A^k$, and to count how many times a particular monomial appears, that is, to compute its coefficient, which correspond to count how many walks have the same weight.
The combinatorial problem gets complicated even for $m=3$.
% \todo[inline]{Maybe say the difficulty to generalize for $m>2$, to find all the different types of monomials and to count how many times they appear, which is equivalente to count the different walks that have the same edges, each the same number of times, but in different order.}
\end{remark}

\subsection{Mc Laughlin's formula}
%In \cite{Larcombe:IGMADRIMP} Larcombe also mentions that
Mc Laughlin in \cite[Theorem~1]{McLaughlin:CIDnP2x2M} gave the following formula for the $n$-th power of a $2\times2$ matrix. %from which \eqref{thm:larcombe.b/c} follows.

%\begin{theorem}[{\cite[Theorem~1]{McLaughlin:CIDnP2x2M}}]
Given a matrix $A=\smatr{a}{b}{c}{d}$, let $T=a+d$ denote its trace and $D=ad-bc$ its determinant. Let
\begin{equation}\label{eq:yk}
 y_n=\sum_{i=0}^{\lfloor n/2\rfloor}\binom{n-i}{i}T^{n-2i}(-D)^i.
\end{equation}
Then, for $n\geq1$,
\begin{equation}\label{eq:An.Mc}
 A^n=\begin{pmatrix}
      y_n-dy_{n-1} & by_{n-1}\\
      cy_{n-1} & y_n-ay_{n-1}
     \end{pmatrix}.
\end{equation}
%\end{theorem}
The proof uses the fact that $y_n$ satisfies the recurrence (see proof of \cite[Theorem~1]{McLaughlin:CIDnP2x2M})
\begin{equation}\label{eq:yk.rec}
 y_{n+1}=Ty_n-Dy_{n-1},\quad\text{with $y_0=1$ and $y_1=T$.}
\end{equation}
In \cite{Larcombe:NFRMLAD2MP} Larcombe gives the following closed formula for $y_n$:
\begin{equation}\label{eq:yk.lar}
 y_n=\frac{1}{2}\biggl(\frac{T}{2}\biggr)^{n}\frac{\bigl(1+\sqrt{1-4D/T^2}\bigr)^{n+1}-\bigl(1-\sqrt{1-4D/T^2}\bigr)^{n+1}}{\sqrt{1-4D/T^2}}.
\end{equation}

\subsection{Williams' formula}
In \cite{Williams:nthP2x2M} Williams gave the following formula for the $n$-th power of a $2\times2$ matrix $A$ with eigenvalues $\alpha$ and $\beta$:
\begin{equation}\label{eq:Ak.will}
 A^n=\begin{cases}
      \alpha^n\bigl(\frac{A-\beta I}{\alpha-\beta}\bigr)+\beta^n\bigl(\frac{A-\alpha I}{\beta-\alpha}\bigr), & \text{if $\alpha\neq\beta$,}\\
      \alpha^n\bigl(nA-(n-1)\alpha I\bigr), & \text{if $\alpha=\beta$.}
     \end{cases}
\end{equation}
In \cite{McLaughlin:CIDnP2x2M} Mc Laughlin expressed \eqref{eq:Ak.will} in terms of the entries of $A$ as follows. Let $T=a+d$ be the trace of $A$ and $D=ad-bc$ its determinant, then
the eigenvalues are given by $\alpha=\bigl(T+\sqrt{T^2-4D}\bigr)/2$ and $\beta=\bigl(T-\sqrt{T^2-4D}\bigr)/2$. In the case $\alpha\neq\beta$ define
\begin{equation}\label{eq:zn}
 z_n=\frac{\alpha^n-\beta^n}{\alpha-\beta}=\frac{1}{2^{n-1}}\sum_{s=0}^{\lfloor\frac{n-1}{2}\rfloor}\binom{n}{2s+1}T^{n-2s-1}(T^2-4D)^s,
\end{equation}
then
\begin{equation}\label{eq:An,WM}
 A^n=z_n A-z_{n-1}DI.
\end{equation}
Equation \eqref{eq:An,WM} also holds in the case $\alpha=\beta$, that is, when $T^2-4D=0$, if $z_n$ is assumed to have the value on the right of \eqref{eq:zn}.

\section{Trace of symmetric powers of a $2\times2$ matrix}\label{sec:trace}

Let $W$ be the vector space of $2\times 2$ matrices over a field $K$. Let $A\in W$, the matrix $A$ acts naturally by matrix multiplication on $K^2$.
The $k$-th symmetric power $S^k K^2$ is isomorphic to the vector space of homogeneous polynomials of degree $k$ in two variables $x$ and $y$.
The monomials
\begin{equation}\label{eq:Sk.basis}
\Pj{j}(x,y)=\mon{k-j}{j},\quad 0\leq j\leq k,
\end{equation}
give a basis for the vector space $S^kK^2$.

% Let $A=\smatr{a}{b}{c}{d}$ as in the introduction. The matrix $A$ acts naturally by matrix multiplication on $\C^2$. The $k$-symmetric power $S^k\C^2$ is isomorphic to the space of homogeneous polynomials of degree $k$ in two variables $x$ and $y$.

The $k$-th symmetric power $S^kA$ of $A$ is the matrix of the linear action of $A$ on $S^kK^2$ given by
\begin{equation}\label{eq:su2.action}
(A\cdot P)(z)=P(zA)
\end{equation}
where
\begin{equation*}
P\in S^kK^2,\quad A=\matr{a}{b}{c}{d},\quad z=(x,y)\quad\text{and}\quad zA=(ax+cy,bx+dy),
\end{equation*}
with respect to the basis of $S^kK^2$ given in \eqref{eq:Sk.basis}.

\begin{remark}\label{rk:S0.S1}
Note that $S^0K^2\cong K$ since homogeneous polynomials of degree $0$ in two variables are the constant functions. In this case by \eqref{eq:su2.action} we have that $S^0A=Id_K$ and $\trace S^0A=1$.
We also have $S^1K^2\cong K^2$ and $S^1A=A$.
\end{remark}

Using the basis \eqref{eq:Sk.basis} of $S^kK^2$ to compute the matrix $S^kA$ we get the following formula for $\trace S^kA$.

\begin{proposition}[{\cite[Lemma~3.1]{Cisneros:I2x2M}}]\label{lem:charma}
Let $A=\smatr{a}{b}{c}{d}$. Then
\begin{equation*}
\trace S^kA=\sum_{j=0}^{k}\sum_{i=0}^{\min\{k-j,j\}}\binom{k-j}{i}\binom{j}{j-i} a^{k-j-i}b^{i}c^{i}d^{j-i}.
\end{equation*}
\end{proposition}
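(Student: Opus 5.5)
The plan is a direct computation of the diagonal of $S^kA$ in the basis \eqref{eq:Sk.basis}. By \eqref{eq:su2.action}, the basis element $\Pj{j}=\mon{k-j}{j}$ is sent to
\begin{equation*}
(A\cdot\Pj{j})(x,y)=(ax+cy)^{k-j}(bx+dy)^{j}.
\end{equation*}
The $j$-th diagonal entry of $S^kA$ is therefore the coefficient of $\mon{k-j}{j}$ in this polynomial, and $\trace S^kA$ is the sum of these coefficients over $0\leq j\leq k$. The proof thus reduces to extracting one coefficient for each $j$.

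To do this I expand both factors with the binomial theorem:
\begin{equation*}
(ax+cy)^{k-j}=\sum_{i}\binom{k-j}{i}a^{k-j-i}c^{i}x^{k-j-i}y^{i},\qquad
(bx+dy)^{j}=\sum_{l}\binom{j}{l}b^{l}d^{j-l}x^{l}y^{j-l}.
\end{equation*}
The product term indexed by $(i,l)$ carries $x^{k-j-i+l}y^{i+j-l}$. Matching this with $\mon{k-j}{j}$ forces $l=i$, which is the condition that the number of $y$'s taken from the first factor equals the number of $x$'s taken from the second. The surviving terms are
\begin{equation*}
\binom{k-j}{i}\binom{j}{i}a^{k-j-i}b^{i}c^{i}d^{j-i}.
\end{equation*}
Since $\binom{j}{i}=\binom{j}{j-i}$, this is exactly the summand in the statement.

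Finally I need the range of $i$. We require $0\leq i\leq k-j$ from the first binomial expansion and $0\leq i\leq j$ from the second, so $i$ runs from $0$ to $\min\{k-j,j\}$. Summing over $j$ gives the formula.

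There is no real obstacle here. The only care needed is in the convention: by \eqref{eq:su2.action} the action is $P(zA)$, so $x\mapsto ax+cy$ and $y\mapsto bx+dy$. This choice determines which off-diagonal entries pair with which powers, but the trace formula is symmetric under $b\leftrightarrow c$, so even a transposed convention gives the same answer.
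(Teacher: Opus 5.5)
Your proposal is correct and takes the same route as the paper, which obtains the formula by computing the matrix of $S^kA$ in the monomial basis $\Pj{j}=\mon{k-j}{j}$ and reading off the coefficient of $\mon{k-j}{j}$ in $(ax+cy)^{k-j}(bx+dy)^{j}$. Your exponent matching $l=i$, the range $0\le i\le\min\{k-j,j\}$, and the identity $\binom{j}{i}=\binom{j}{j-i}$ complete that computation.
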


The main result of \cite{Cisneros:I2x2M} gives $\trace S^kA$ in terms of the anti-diagonal entries of $A$ and $A^{k+1}$.

\begin{theorem}[{\cite[Proposition~2.2]{Cisneros:I2x2M}}]\label{thm:fk.sk}
Let $A=\smatr{a}{b}{c}{d}$ with $b\neq0$ and $c\neq0$. Denote its $k+1$-st power by $A^{k+1}=\smatr{a_{k+1}}{b_{k+1}}{c_{k+1}}{d_{k+1}}$. Then
\begin{equation*}
\trace S^kA=b_{k+1}/b=c_{k+1}/c.
\end{equation*}
\end{theorem}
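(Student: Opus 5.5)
The plan is to compare the explicit polynomial formulae: Proposition~\ref{lem:charma} gives $\trace S^kA$, and Theorem~\ref{lem:power} with $n=k+1$ gives $b_{k+1}$ and $c_{k+1}$. From Theorem~\ref{lem:power}, every monomial of $b_{k+1}$ has the form $a^{k-2s-m}b^{s+1}c^sd^m$, so $b_{k+1}$ is divisible by $b$:
\begin{equation*}
\frac{b_{k+1}}{b}=\sum_{s=0}^{\lfloor k/2\rfloor}\sum_{m=0}^{k-2s}\binom{k-s-m}{s}\binom{m+s}{m}a^{k-2s-m}b^{s}c^{s}d^{m}.
\end{equation*}
The formula for $c_{k+1}$ has identical coefficients with the exponents of $b$ and $c$ interchanged. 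Hence $c_{k+1}/c$ is the very same polynomial, and the second equality $b_{k+1}/b=c_{k+1}/c$ is immediate.

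It remains to match this polynomial with the double sum of Proposition~\ref{lem:charma}. In that sum I substitute $i=s$ and $j-i=m$, so that $j=s+m$ and the general term is $a^{k-2s-m}b^sc^sd^m$. Its coefficient becomes $\binom{k-s-m}{s}\binom{s+m}{m}$, using $\binom{j}{j-i}=\binom{s+m}{m}$ and $\binom{k-j}{i}=\binom{k-s-m}{s}$. These are exactly the coefficients found above.

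Next I check that the index ranges agree. In Proposition~\ref{lem:charma} the conditions are $0\le j\le k$ and $0\le i\le\min\{k-j,j\}$. In the new variables these read $s\ge0$, $m\ge0$ and $2s+m\le k$, which is precisely $0\le s\le\lfloor k/2\rfloor$ and $0\le m\le k-2s$. Both sums therefore run over the same set of monomials $a^rb^sc^sd^m$ with $r+2s+m=k$, with equal coefficients, so they are equal as polynomials.

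The main obstacle is only this bookkeeping: the change of summation variables and confirming that the boundary terms are included. The term $s=0$ gives $\sum_m a^{k-m}d^m$, and it appears on both sides. Note that Theorem~\ref{lem:power} treats $s=0$ in $b_n$ uniformly, with no separate leading term as there is in $a_n$, so no special case arises.

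The hypotheses $b\neq0$ and $c\neq0$ are needed only to make sense of the division. The identities $\trace S^kA\cdot b=b_{k+1}$ and $\trace S^kA\cdot c=c_{k+1}$ in fact hold as polynomial identities.
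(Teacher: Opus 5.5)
Your proposal is correct and is essentially the paper's own (original) proof. The paper cites that proof as comparing the formula for $\trace S^kA$ in Proposition~\ref{lem:charma} with the expressions for $b_{k+1}/b$ and $c_{k+1}/c$ from Theorem~\ref{lem:power}, and your reindexing $i=s$, $j=s+m$, together with the check that $0\le i\le\min\{k-j,j\}$ matches $s,m\ge0$, $2s+m\le k$, supplies exactly the bookkeeping that comparison needs.
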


In \cite{Cisneros:I2x2M} four proofs of Theorem~\ref{thm:fk.sk} are given. The second one, which is the original proof,
compares the formula for $\trace S^kA$ given in Proposition~\ref{lem:charma} with the formulae for
$b_{k+1}/b$ and $c_{k+1}/c$ obtained from Theorem~\ref{lem:power}.

\begin{remark}\label{rem:inv.conj}
Since $\trace S^kA$ is invariant under conjugation of $A$, $b_{k+1}/b$ is also invariant under conjugation of $A$. In fact, if we know that $\trace S^kA=b_{k+1}/b$ we get that $\trace S^kA=b_{k+1}/b=c_{k+1}/c$ by conjugating $A$ by $\smatr{0}{1}{1}{0}$.
\end{remark}

Theorem~\ref{thm:fk.sk} expreses $\trace S^kA$ in terms of the entries of $A$ and $A^{k+1}$. On the other hand, the following theorem expresses the entries of $A^k$ in terms of the entries of $A$ and $\trace S^kA$ and $\trace S^{k-1}A$, and shows that the connection between Mc Laughlin's formula \eqref{eq:An.Mc} for $A^k$ and the formula given by Theorem~\ref{lem:power} is precisely Theorem~\ref{thm:fk.sk}.

\begin{theorem}\label{thm:Ak.trSk}
Let $A=\smatr{a}{b}{c}{d}$. Denote its $k$-th power by $A^{k}=\smatr{a_{k}}{b_{k}}{c_{k}}{d_{k}}$.
Then
\begin{align*}
 a_{k}&=\trace S^kA-d\trace S^{k-1}A,\\
 b_k&=b\trace S^{k-1}A,\\
 c_k&=c\trace S^{k-1}A,\\
 d_k&=\trace S^kA-a\trace S^{k-1}A.\\
\end{align*}
Hence
\begin{equation}\label{eq:Ak.TSk}
A^k=\begin{pmatrix}
     \trace S^kA-d\trace S^{k-1}A & b\trace S^{k-1}A\\
     c\trace S^{k-1}A &\trace S^kA-a\trace S^{k-1}A
    \end{pmatrix}.
\end{equation}
\end{theorem}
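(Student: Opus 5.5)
The plan is to derive all four identities from Theorem~\ref{thm:fk.sk}, using the recursions \eqref{1}--\eqref{4} from the proof of Theorem~\ref{lem:power}, and then to remove the hypothesis $b\neq0$, $c\neq0$ by a polynomial identity argument.

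\textbf{Anti-diagonal entries.} Assume first that $b\neq0$ and $c\neq0$. Theorem~\ref{thm:fk.sk}, applied with $k-1$ in place of $k$, gives directly
\[
b_k=b\trace S^{k-1}A \quad\text{and}\quad c_k=c\trace S^{k-1}A .
\]
For $k=1$ this reads $b_1=b\cdot 1$, which agrees with Remark~\ref{rk:S0.S1}, since $\trace S^0A=1$.

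\textbf{Diagonal entries.} I would use recursion \eqref{2} at index $k+1$, namely $b_{k+1}=ba_k+db_k$. Substituting $b_{k+1}=b\trace S^kA$ and $b_k=b\trace S^{k-1}A$ gives
\[
b\trace S^kA=ba_k+bd\trace S^{k-1}A .
\]
Dividing by $b$ yields $a_k=\trace S^kA-d\trace S^{k-1}A$.

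Symmetrically, recursion \eqref{3} at index $k+1$ gives $c_{k+1}=ac_k+cd_k$. The same substitution with $c$ in place of $b$ yields $d_k=\trace S^kA-a\trace S^{k-1}A$.

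\textbf{Removing the hypothesis.} All four identities are polynomial identities in $a,b,c,d$. Indeed, both sides are polynomials: the left sides by matrix multiplication, and the traces by Proposition~\ref{lem:charma}. These identities hold on the Zariski-dense set $\{bc\neq0\}$, so they hold identically over $\C$, and hence over any field, since they are identities over $\mathbb{Z}$.

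Alternatively, one can avoid density altogether. Compare the $b_n$ formula of Theorem~\ref{lem:power} with Proposition~\ref{lem:charma} as polynomials: $b_k$ has $b$ as a factor, and $b_k/b$ is a polynomial that coincides with $\trace S^{k-1}A$ coefficientwise. This is the original proof of Theorem~\ref{thm:fk.sk}, now read as a polynomial identity. Once $b_k=b\trace S^{k-1}A$ and $c_k=c\trace S^{k-1}A$ are known as polynomial identities, the diagonal identities follow by the same recursion argument over the polynomial ring $\mathbb{Z}[a,b,c,d]$. There, division by $b$ is legitimate because $b$ is a non-zero-divisor.

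\textbf{Main obstacle.} The only delicate point is the degenerate case $b=0$ or $c=0$, where Theorem~\ref{thm:fk.sk} does not apply directly. The density or polynomial-ring argument above handles it, so I expect no real difficulty.

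\textbf{Final step.} Assembling the four entries gives \eqref{eq:Ak.TSk}. Comparing with \eqref{eq:An.Mc} then shows that $y_n=\trace S^nA$, which is the claimed connection with Mc Laughlin's formula.
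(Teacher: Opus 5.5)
Your proof is correct. For $bc\neq0$ it is exactly the paper's argument: Theorem~\ref{thm:fk.sk} gives the anti-diagonal entries, and \eqref{2} and \eqref{3} at index $k+1$, divided by $b$ and by $c$ respectively, give the diagonal ones.

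The two routes differ only in the degenerate cases. The paper handles these by direct computation. For $b=c=0$, the basis monomials $x^{k-j}y^j$ are eigenvectors of $S^kA$, so $\trace S^kA=\sum_{j=0}^k a^{k-j}d^j$, and $a_k=a^k$, $d_k=d^k$ follow by a telescoping sum. When exactly one of $b,c$ vanishes, it observes that $\trace S^kA$ is again given by \eqref{eq:tr.diag} and reduces to the diagonal computation.

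You instead note that every quantity involved is an integer polynomial in $a,b,c,d$ (the traces by Proposition~\ref{lem:charma}). Hence identities valid on the dense set $\{bc\neq0\}$ hold identically. Your route has several advantages:
\begin{itemize}
\item it is shorter and treats all degenerate cases at once;
\item it shows the identities hold in $\mathbb{Z}[a,b,c,d]$, and hence over any field $K$ as in Section~\ref{sec:trace};
\item it covers points the paper's triangular case passes over quickly, for instance $b_k=b\trace S^{k-1}A$ when $c=0$, where Theorem~\ref{thm:fk.sk} does not literally apply.
\end{itemize}
The paper's case analysis, by contrast, is entirely elementary and produces the explicit triangular formula \eqref{eq:tr.diag}, which it reuses in the proof of Theorem~\ref{thm:yx=sk}.

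Your closing remark on $y_n$ is really Theorem~\ref{thm:yx=sk}, not part of this statement. It is correct: cancel $b$ in $by_{k-1}=b\trace S^{k-1}A$ inside the polynomial ring. It bypasses the paper's recurrence argument, at the cost of relying on Mc~Laughlin's formula \eqref{eq:An.Mc}.
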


\begin{proof}
First suppose $b\neq0$ and $c\neq0$.
The second and third equalities follow from Theorem~\ref{thm:fk.sk}. For the first and fourth equalities, consider the equations \eqref{2} and \eqref{3} with $n=k+1$ and divide them, respectively, by $b$ and $c$ to get
\begin{equation*}
\frac{b_{k+1}}{b}=\frac{b}{b}a_{k}+d\frac{b_{k}}{b},\qquad \frac{c_{k+1}}{c}=a\frac{c_{k}}{c}+\frac{c}{c}d_{k},
\end{equation*}
and apply Theorem~\ref{thm:fk.sk}.

If $b=c=0$, $A$ is the diagonal matrix $A=\smatr{a}{0}{0}{d}$ and $A^k=\smatr{a^{k}}{0}{0}{d^{k}}$. By \eqref{eq:su2.action} we have that in this case the polynomials $P_j(x,y)$ are eigenvectors of $S^kA$
with eigenvalues $a^{k-j}d^j$ with $0\leq j\leq k$, hence $S^kA$ is a diagonal matrix and we have
\begin{equation}\label{eq:tr.diag}
 \trace S^kA=\sum_{j=0}^ka^{k-j}d^j.
\end{equation}
Then we have
\begin{align*}
\trace S^kA-d\trace S^{k-1}A&=\sum_{j=0}^ka^{k-j}d^j-d\sum_{j=0}^{k-1}a^{k-j-1}d^j\\
&=\sum_{j=0}^ka^{k-j}d^j-\sum_{j=0}^{k-1}a^{k-j-1}d^{j+1}\\
\intertext{separating in the first sum $j=0$ and then taking $j=m+1$}
&=a^k+\sum_{m=0}^{k-1}a^{k-m-1}d^{m+1}-\sum_{j=0}^{k-1}a^{k-j-1}d^{j+1}\\
&=a^k
\end{align*}
since the two sums cancel out. An analogous computation is for $d_k$.

If $b\neq0$ and $c=0$ by Theorem~\ref{thm:fk.sk} and Theorem~\ref{lem:power} $\trace S^kA$ does not involve $b$ (see the third identity in Corollary~\ref{cor:Sk.id} with $c=0$) and we get again \eqref{eq:tr.diag} and $a_k$ and $d_k$ are as in the previous diagonal case. The case $b=0$ and $c\neq0$ is analogous.
\end{proof}

The resemblance of \eqref{eq:An.Mc} and \eqref{eq:Ak.TSk} of course is not coincidence and we have:

\begin{theorem}\label{thm:yx=sk}
Let $A=\smatr{a}{b}{c}{d}$. Let $y_k$ given by \eqref{eq:yk}.
Then
\begin{equation*}
y_k=\trace S^kA.
\end{equation*}
\end{theorem}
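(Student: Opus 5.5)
Both $y_k$ and $\trace S^kA$ are polynomials in the entries $a,b,c,d$, so it suffices to prove the identity on a Zariski-dense set of matrices. The plan is to show that $t_k:=\trace S^kA$ satisfies the same recurrence \eqref{eq:yk.rec} and the same initial values as $y_k$.

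For the initial values, Remark~\ref{rk:S0.S1} gives $t_0=1=y_0$ and $t_1=\trace A=T=y_1$.

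For the recurrence, I will use conjugation invariance: $t_k$ is a conjugation invariant of $A$, because $S^k$ is multiplicative, $S^k(PAP^{-1})=S^kP\,S^kA\,(S^kP)^{-1}$. So for $A$ diagonalizable with eigenvalues $\alpha,\beta$, we may replace $A$ by $\smatr{\alpha}{0}{0}{\beta}$. Then \eqref{eq:tr.diag} gives
\[
t_k=\sum_{j=0}^k\alpha^{k-j}\beta^j,
\]
the complete homogeneous symmetric polynomial $h_k(\alpha,\beta)$. Multiplying out shows $(\alpha+\beta)h_k-\alpha\beta h_{k-1}=h_{k+1}$: all cross terms cancel except $\alpha^{k+1}$ and $\beta^{k+1}$, exactly as in the telescoping computation in the proof of Theorem~\ref{thm:Ak.trSk}. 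Since $\alpha+\beta=T$ and $\alpha\beta=D$, this is $t_{k+1}=Tt_k-Dt_{k-1}$. By induction on $k$, $t_k=y_k$ for all diagonalizable $A$.

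Diagonalizable matrices, for instance those with $T^2-4D\neq0$, form a dense set over $\C$ (Zariski-dense in the affine space of entries). Both sides are polynomials in $a,b,c,d$, so the identity holds for all $A$. The same polynomial identity then holds over any field $K$, since it is an identity in $\mathbb{Z}[a,b,c,d]$.

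An alternative route, avoiding density, uses only results already stated. By Theorem~\ref{thm:Ak.trSk}, $b_k=b\,t_{k-1}$, and by \eqref{eq:An.Mc}, $b_k=b\,y_{k-1}$. For $b\neq0$ this gives $t_{k-1}=y_{k-1}$ directly. Extending to $b=0$ again requires a polynomial-continuity argument in $b$: both sides are polynomials in $a,b,c,d$ that agree whenever $b\neq0$, hence they agree identically.

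The main obstacle is the step passing from special matrices to arbitrary ones. It must be justified by viewing both sides as polynomials in the indeterminates $a,b,c,d$, via the explicit formula in Proposition~\ref{lem:charma}, so that agreement on a dense set forces equality.
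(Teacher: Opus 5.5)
Your proof is correct, and your main route differs from the paper's. For $bc\neq0$ the paper applies Cayley--Hamilton to the anti-diagonal entries of $A^{k+2}-TA^{k+1}+DA^k=0$, then transfers the resulting recurrence for $b_{k+1}/b$ to $\trace S^kA$ through Theorem~\ref{thm:fk.sk}. The cases $b=0$ or $c=0$ are treated separately: there $\trace S^kA=\sum_j a^{k-j}d^j$, and the paper checks the recurrence for that sum directly (and also via a generating function). That is exactly your $h_k$ computation with $\alpha=a$, $\beta=d$.

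You instead use conjugation invariance to make this ``degenerate'' computation the whole proof, identifying $\trace S^kA=h_k(\alpha,\beta)$ in the eigenvalues. This makes the argument independent of Theorem~\ref{thm:fk.sk} and of the entries of $A^k$, and removes the case split on $b,c$. The cost is passing to eigenvalues over $\C$ plus the density step. You can drop the density step by triangularising instead of diagonalising: every complex $A$ is conjugate to some $\smatr{\alpha}{\ast}{0}{\beta}$, and for such a matrix the monomial basis already makes $S^kA$ triangular with diagonal entries $\alpha^{k-j}\beta^j$. The paper's route, for its part, needs neither eigenvalues nor density. It also displays Theorem~\ref{thm:fk.sk} as the bridge between Mc Laughlin's $y_k$ and $\trace S^kA$, which is the point of that section.

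Your alternative route is essentially the paper's generic case, with Mc Laughlin's closed formula in place of the recurrence and density in place of the separate computation for $b=0$. Run it on $bc\neq0$ only. The paper's justification of Theorem~\ref{thm:Ak.trSk} when $c=0$ cites Corollary~\ref{cor:Sk.id}, which is itself derived from the theorem being proved.

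(A small wording point: in $(\alpha+\beta)h_k-\alpha\beta h_{k-1}$ what survives is $\alpha h_k+\beta^{k+1}=h_{k+1}$, not only $\alpha^{k+1}$ and $\beta^{k+1}$. The identity itself is right.)
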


\begin{proof}
First suppose $b\neq0$ and $c\neq0$. The proof follows from the second part of the fourth proof of Theorem~\ref{thm:fk.sk} in \cite{Cisneros:I2x2M} and \eqref{eq:yk.rec}.
Let $T=a+d$ be the trace of $A$ and $D=ad-bc$ its determinant. The characteristic polynomial of $A$ is given by $\lambda^2-T\lambda+D=0$. By Cayley-Hamilton Theorem and multiplying by $A^k$ we have
$A^{k+2}-T A^{k+1}+D A^k=0$. From the anti-diagonal entries we get
\begin{equation}\label{eq:bk.rec}
\begin{split}
\frac{b_{k+2}}{b}&=T\frac{b_{k+1}}{b}-D\frac{b_k}{b},\qquad\text{with $\frac{b_1}{b}=1$ and $\frac{b_2}{b}=T$.}\\
\frac{c_{k+2}}{c}&=T\frac{c_{k+1}}{c}-D\frac{c_k}{c},\qquad\text{with $\frac{c_1}{c}=1$ and $\frac{c_2}{c}=T$.}\\
\end{split}
\end{equation}
By Theorem~\ref{thm:fk.sk} $\trace S^kA=b_{k+1}/b=c_{k+1}/c$ thus from \eqref{eq:bk.rec} we get
\begin{equation}\label{eq:Sk.rec}
 \trace S^{k+1}A=T\trace S^kA-D\trace S^{k-1}A,
\end{equation}
with $\trace S^{0}A=1$ and $\trace S^1A=T$ (see Remark~\ref{rk:S0.S1}). From \eqref{eq:Sk.rec} and  \eqref{eq:yk.rec} we have that $\trace S^kA$ and $y_k$ satisfy the same recurrence, so they must be equal.

If $b=0$, $c=0$ or $b=c=0$ we have that the trace of $A$ is $T=a+d$ and its determinant $D=ad$. We saw in the proof of Theorem~\ref{thm:Ak.trSk} that in this case $\trace S^kA$ is given by \eqref{eq:tr.diag}. On the other hand, by \eqref{eq:yk} we have that
\begin{equation}\label{eq:yk.b0}
y_k=\sum_{i=0}^{\lfloor k/2\rfloor}\binom{k-i}{i}(a+d)^{k-2i}(-ad)^i,
\end{equation}
so we need to prove that they coincide.

Setting $u_k=\trace S^kA$, by the geometric series we have
\begin{equation}
u_k=\trace S^kA=\sum_{j=0}^ka^{k-j}d^j=\frac{a^{k+1}-d^{k+1}}{a-d}.
\end{equation}
Since $a$ and $d$ are the roots of the equation $z^2-(a+d)z+ad=0$ both individual sequences $a^k$ and $d^k$ satisfy the recurrence relation:
\begin{equation*}
a^k=(a+d)a^{k-1}-(ad)a^{k-2},\qquad d^k=(a+d)d^{k-1}-(ad)d^{k-2},
\end{equation*}
since this recurrence is linear $u_k$ also satisfies the recurrence relation
\begin{equation}\label{eq:rec.rel}
u_k=(a+d)u_{k-1}-(ad)u_{k-2},\quad\text{with $u_0=1$ and $u_1=a+d$.}
\end{equation}
By \eqref{eq:yk.rec} $y_k$ also satisfies the recurrence relation \eqref{eq:rec.rel}, so they must be equal.
An alternative proof of this case is to find a closed form for \eqref{eq:rec.rel} using the generating function $U(t) = \sum_{k=0}^{\infty} u_k t^k$. We have that
\begin{equation*}
U(t) - xtU(t) + yt^2U(t) = u_0 + (u_1 - xu_0)t + \sum_{k=2}^\infty (u_k - xu_{k-1} + yu_{k-2})t^k.
\end{equation*}
Setting $x=a+d, y=ad$, the summation term is 0. Thus:
\begin{equation}
U(t) = \frac{1 + (a+d - (a+d))t}{1 - (a+d)t + adt^2} =\frac{1}{1 - ((a+d)t - adt^2)}=\sum_{i=0}^\infty ((a+d)t - adt^2)^i.
\end{equation}
Using the Binomial Theorem we have
\begin{equation}
U(t) = \sum_{i=0}^{\infty} \sum_{j=0}^i \binom{i}{j} (a+d)^{i-j} (-ad)^j t^{i+j}.
\end{equation}
Setting $k = i+j$, the coefficient of $t^k$ is
\begin{equation}
u_k=\sum_{j=0}^{\lfloor k/2 \rfloor} \binom{k-j}{j} (a+d)^{k-2j} (-ad)^j
\end{equation}
which coincides with the value of $y_k$ given in \eqref{eq:yk.b0}.
\end{proof}

\begin{remark}
Since $\trace S^kA=y_k$ the quantity $y_k$ defined by Mc Laughlin is invariant under conjugation of $A$. This is not mentioned in the literature, but it is obvious from its definition in \eqref{eq:yk} since it is defined in terms of $T$ and $D$ which are quantities that are invariant under conjugation of $A$.
\end{remark}

\section{Combinatorial identities}\label{sec:identities}

In this section we obtain various combinatorial identities using the results of the two previous sections.

\subsection{Combinatorial identities from Theorem~\ref{lem:power}}

As a corollary of the proof of Theorem~\ref{lem:power} we have the following set of combinatorial identities.

\begin{proposition}
The following quantities are all equal:
\begin{align*}
2^{n-1}&=1+\sum_{s=1}^{[\frac{n}{2}]}\sum_{m=0}^{n-2s}
\binom{n-s-m}{s}\binom{m+s-1}{m}=1+\sum_{s=1}^{[\frac{n}{2}]}\binom{n}{2s}.\\
2^{n-1}&=\sum_{s=0}^{[\frac{n-1}{2}]}\sum_{m=0}^{n-2s-1}
\binom{n-s-m-1}{s}\binom{m+s}{m}=\sum_{s=0}^{[\frac{n-1}{2}]}\binom{n}{2s+1}.\\
2^{n-1}&=1+\sum_{s=1}^{[\frac{n}{2}]}\sum_{m=0}^{n-2s}
\binom{n-s-m-1}{s-1}\binom{m+s}{m}=1+\sum_{s=1}^{[\frac{n}{2}]}\binom{n}{2s}.
\end{align*}

\end{proposition}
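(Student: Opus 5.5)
Specialise Theorem~\ref{lem:power} to the matrix $J=\smatr{1}{1}{1}{1}$, that is, $a=b=c=d=1$. Every monomial $a^{r}b^{s}c^{s}d^{m}$ (or $a^rb^{s+1}c^sd^m$) then evaluates to $1$. Each double sum in the proposition is therefore exactly the corresponding entry of $J^n$: the first is $a_n$, the second is $b_n$ (equivalently $c_n$), and the third is $d_n$, each evaluated at $J$. Combinatorially, each double sum counts all walks of length $n$ in the digraph of Figure~\ref{fig} with prescribed start and end vertices.

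Compute $J^n$ directly. Since $J^2=2J$, induction gives $J^n=2^{n-1}J$ for $n\geq1$, so every entry of $J^n$ equals $2^{n-1}$. Alternatively, a walk of length $n$ with fixed start and end vertex is determined by its first $n-1$ intermediate vertex choices, giving $2^{n-1}$ walks. This establishes the equality of $2^{n-1}$ with each of the three middle expressions.

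For the right-hand sums, use the standard binomial facts
\[
\sum_{s\geq0}\binom{n}{2s}=\sum_{s\geq0}\binom{n}{2s+1}=2^{n-1}\quad(n\geq1),
\]
which follow by expanding $(1+1)^n\pm(1-1)^n$. Then $1+\sum_{s=1}^{\lfloor n/2\rfloor}\binom{n}{2s}$ is the even sum, since the $s=0$ term is $1$, and $\sum_{s=0}^{\lfloor (n-1)/2\rfloor}\binom{n}{2s+1}$ is the odd sum.

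To match the paper's spirit, one could also check the inner sums termwise:
\[
\sum_{m=0}^{n-2s}\binom{n-s-m}{s}\binom{m+s-1}{m}=\binom{n}{2s}.
\]
This is a Vandermonde-type convolution: choosing $2s$ positions among $n$ corresponds to splitting at the $s$-th chosen element. Similarly,
\[
\sum_{m}\binom{n-s-m-1}{s}\binom{m+s}{m}=\binom{n}{2s+1}.
\]
These follow from the identity $\sum_m\binom{p-m}{s}\binom{m+q}{q}=\binom{p+q+1}{s+q+1}$, i.e.\ the Chu--Vandermonde upper-index convolution; for $a_n$ take $p=n-s$, $q=s-1$, and for $b_n$ take $p=n-s-1$, $q=s$.

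The only real obstacle is this termwise identity. The evaluation at $J$ already proves the equality of the full sums, so the termwise identity is not strictly needed. I would present it via the upper-index convolution, counting $(s+q+1)$-subsets of $\{0,\dots,p+q\}$ by the position of their $(q+1)$-st element.
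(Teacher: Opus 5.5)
Your argument is correct (for $n\geq1$, which the statement tacitly assumes), but it is organised differently from the paper's proof.

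\textbf{Your route.} You treat all three rows uniformly. Specialising Theorem~\ref{lem:power} at $J=\smatr{1}{1}{1}{1}$ and using $J^n=2^{n-1}J$ identifies the three double sums with $a_n$, $b_n$, $d_n$ at once. The right-hand sums then follow from the parity identity obtained from $(1+1)^n\pm(1-1)^n$.

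\textbf{The paper's route.} The paper specialises at $J$ only for the second row. There it gets $b_n=2^{n-1}$ from Mc Laughlin's formula ($y_{n-1}=2^{n-1}$ since $D=0$), and gets $\sum_s\binom{n}{2s+1}$ from Williams' formula \eqref{eq:An,WM} with $T=2$, $D=0$. The first row is proved purely by counting closed walks at vertex~$1$. The third row is deduced from the first via the reindexing in Remark~\ref{rem:a.d}.

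\textbf{What each buys.} Your route is shorter and self-contained: it needs neither the Mc Laughlin and Williams formulae nor Remark~\ref{rem:a.d}. The paper's first-row argument, however, gives finer information. It counts the closed walks with exactly $s$ traversals of each arc as $\binom{n}{2s}$: choose the $2s$ arc-steps, which must alternate $c,b$, and the loops are then forced. This proves the termwise identity \eqref{eq:ci.inproof}, not merely equality of the totals. That termwise identity is what Corollary~\ref{cor:2.2s} and Corollary~\ref{cor:Lk} later rely on.

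Your main argument cannot deliver this, since equal totals do not force equal summands. So your optional Chu--Vandermonde step is worth keeping. The parameter choices $p=n-s$, $q=s-1$ and $p=n-s-1$, $q=s$ are right, and the extra terms with $m>n-2s$ (respectively $m>n-2s-1$) vanish. Splitting a $2s$-subset of $\{1,\dots,n\}$ at its $s$-th element $k$, with $m=k-s$, reproduces exactly the sum in \eqref{eq:ci.inproof}. It is therefore an independent bijective proof of that identity. It also gives the odd case directly, without the paper's later detour through $\smatr{1}{b}{1}{1}$ and Williams' formula.
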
\label{prop:2n=binoms}
\begin{proof}
\textbf{First row of equalities.}
This is proved counting the total number of closed walks of
length $n$ starting and finishing at vertex 1.  
In the proof of Theorem~\ref{lem:power} we saw that the number of closed
walks of length $n$ on the digraph starting and finishing at vertex 1
passing $r$ times through the loop $a$, $s$ times through each of the
arcs $b$ and $c$ and $m$ times through the loop $d$ is
$K_{(r,s,m)}=\binom{r+s}{s}\binom{m+s-1}{m}$ when $s\geq1$. Hence, the total
number of closed 
walks of length $n$ starting and finishing at vertex 1 is given by the
expression on the middle, where the term $1$ corresponds to the
case $s=0$ which is the walk which 
passes $n$ times through the loop $a$. On the other hand, it is easy
to see that it is also $2^{n-1}$ since at each stage of the walk we
have two possibilities, either, take a loop and get back to the same
vertex, or go to the other vertex using an arc. We have only $n-1$
such choices, since the walk is closed, in the last one we have to go
to vertex 1. To see that this is also equal to the expression in the
right-hand-side note that the number of walks on the digraph of length
$n$ which pass $s$ times through each of the arcs $b$ and $c$ (not
caring about how many times they pass through the loops $a$ or $d$) is
$\binom{n}{2s}$ since we just need to choose from the $n$ places, $2s$
places ($s$ for $c$ and $s$ for $b$), the $b$'s and $c$'s alternate
starting with a $c$ and they automatically determine how many $a$'s and
$d$'s are: the spaces between a $c$ and a $b$ must be $d$'s and the
spaces between a $b$ and a $c$ must be $a$'s. Hence we have that
\begin{equation}\label{eq:ci.inproof}
\sum_{m=0}^{n-2s}\binom{n-s-m}{s}\binom{m+s-1}{m}=\binom{n}{2s}.
\end{equation}
The other two rows of equalities are proved in a similar way, but here we give alternative proofs.

\textbf{Second row of equalities.}
Consider the matrix $A=\smatr{1}{1}{1}{1}$, it has trace $T=2$ and determinant $D=0$.
Denote its $n$-th power by $A^n=A=\smatr{a_n}{b_n}{c_n}{d_n}$. We compare the expressions of $b_n$ (or $c_n$) obtained by the formulae of $A^n$ given in
Theorem~\ref{lem:power}, \eqref{eq:An.Mc} and \eqref{eq:An,WM}. Since $D=0$, the only term in \eqref{eq:yk} which is non-zero is when $i=0$, so $y_{n-1}=2^{n-1}$ and by \eqref{eq:An.Mc} we have that
$b_n=y_{n-1}=2^{n-1}$. On the other hand, by Theorem~\ref{lem:power}, $b_n$ equals the expression in the middle. Finally, since $D=0$, by \eqref{eq:An,WM} $b_n=z_n$ and substituting $T=2$ and $D=0$ in
\eqref{eq:zn} we obtain the third expression of the second row of equalities.

\textbf{Third row of equalities.}
One can also get the third row of equalities from the first one using Remark~\ref{rem:a.d}.
\end{proof}

\begin{remark}
Knowing from the middle row of Proposition~\ref{prop:2n=binoms} that $2^{n-1}=\sum_{s=0}^{[\frac{n-1}{2}]}\binom{n}{2s+1}$, the equality
\begin{equation*}
1+\sum_{s=1}^{[\frac{n}{2}]}\binom{n}{2s}=2^{n-1}
\end{equation*}
follows from the identity $\binom{n+1}{2s+1}=\binom{n}{2s+1}+\binom{n}{2s}$.
% \begin{align*}
% 2^{n}&=\sum_{s=0}^{[\frac{n}{2}]}\binom{n+1}{2s+1}=\sum_{s=0}^{[\frac{n}{2}]}\binom{n}{2s+1}+\sum_{s=0}^{[\frac{n}{2}]}\binom{n}{2s}\\
% 2^{n}&=2^{n-1}+\sum_{s=0}^{[\frac{n}{2}]}\binom{n}{2s}.
% \end{align*}
\end{remark}

\begin{corollary}\label{cor:2.2s}
\begin{align*}
\binom{n}{2s}&=\sum_{m=0}^{n-2s}\binom{n-s-m}{s}\binom{m+s-1}{m},\\
\binom{n}{2s+1}&=\sum_{m=0}^{n-2s-1}\binom{n-s-m-1}{s}\binom{m+s}{m}.\\
\end{align*}
\end{corollary}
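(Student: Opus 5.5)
The first identity is exactly \eqref{eq:ci.inproof}, which was established in the proof of Proposition~\ref{prop:2n=binoms}. Let me recall the argument and then adapt it to the second identity. Fix $n$ and $s\ge 1$. Count the walks of length $n$ on the digraph of Figure~\ref{fig} that start and end at vertex $1$ and traverse each of the arcs $b$ and $c$ exactly $s$ times.

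This set can be counted in two ways.
\begin{itemize}
\item Group the walks by the number $m$ of uses of the loop $d$. The proof of Theorem~\ref{lem:power} shows there are $K_{(n-2s-m,s,m)}=\binom{n-s-m}{s}\binom{m+s-1}{m}$ such walks for each $m$, and summing over $0\le m\le n-2s$ gives the right-hand side.
\item Alternatively, choose the $2s$ positions among the $n$ steps that use the arcs. These arcs must alternate $c,b,c,b,\dots$, beginning with $c$ because we start at vertex $1$, and the walk returns to vertex $1$ since the last of them is a $b$. Every remaining step is then forced: it is the loop $a$ if we are at vertex $1$ and the loop $d$ if we are at vertex $2$. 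This gives a bijection with $2s$-subsets of $\{1,\dots,n\}$, hence $\binom{n}{2s}$ walks.
\end{itemize}
For $s=0$ both sides equal $1$ under the convention $\binom{m-1}{m}=\delta_{m0}$.

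For the second identity I would run the same double count on the walks of length $n$ from vertex $2$ to vertex $1$ that use $b$ exactly $s+1$ times and $c$ exactly $s$ times; these are the walks indexing the monomials of $b_n$.
\begin{itemize}
\item By the proof of Theorem~\ref{lem:power}, for each number $m$ of $d$-loops there are $K_{(r,s,m)}=\binom{r+s}{s}\binom{m+s}{m}$ such walks with $r=n-2s-m-1$. Since $\binom{r+s}{s}=\binom{n-s-m-1}{s}$, summing over $0\le m\le n-2s-1$ gives the right-hand side.
\item Alternatively, choose the $2s+1$ positions of the arc steps among the $n$ steps. Starting at vertex $2$, these arcs must alternate $b,c,b,\dots,b$, and there is an odd number of them, so the walk ends at vertex $1$. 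All loop steps are again forced by the current vertex. This is a bijection with $(2s+1)$-subsets, giving $\binom{n}{2s+1}$.
\end{itemize}
A quick cross-check is available: summing over $s$ reproduces the second row of Proposition~\ref{prop:2n=binoms}.

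The only delicate point is verifying the bijection: every placement of the arc positions must determine a valid walk, and conversely every walk must arise from exactly one placement. Both follow because in this digraph each vertex has exactly one loop and exactly one outgoing arc. I would also check the boundary cases $s=0$ and $m=n-2s-1$ against the summation ranges.

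If a purely algebraic confirmation is wanted, there is a second route. Take the matrix $A=\smatr{1}{1}{1}{1}$ with the off-diagonal entries replaced by a variable $t$, that is $\smatr{1}{t}{t}{1}$. Compare the coefficient of $t^{2s}$ in $a_n$, respectively $t^{2s+1}$ in $b_n$, as given by Theorem~\ref{lem:power} with the value obtained from the eigen-decomposition $A^n=\tfrac12\bigl((1+t)^n+(1-t)^n\bigr)I+\tfrac12\bigl((1+t)^n-(1-t)^n\bigr)\smatr{0}{1}{1}{0}$. The even and odd parts of $(1+t)^n$ give exactly $\binom{n}{2s}$ and $\binom{n}{2s+1}$.
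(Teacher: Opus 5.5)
Your proof is correct. For the first identity it is the same double count of closed walks at vertex $1$ that the paper uses to obtain \eqref{eq:ci.inproof}.

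For the second identity your route differs from the paper's. The paper only remarks that it ``can be proved analogously'' and does not carry this out. Instead it proves it algebraically:
\begin{itemize}
\item take $A=\smatr{1}{b}{1}{1}$ (trace $2$, determinant $1-b$);
\item write $b_n=bz_n$ using \eqref{eq:An,WM}, and evaluate \eqref{eq:zn} to get $b_n=\sum_s\binom{n}{2s+1}b^{s+1}$;
\item equate coefficients of $b^{s+1}$ with the formula for $b_n$ in Theorem~\ref{lem:power}.
\end{itemize}
Your main argument is the bijective version the paper omits. Walks from vertex $2$ to vertex $1$ with $s+1$ arcs $b$ and $s$ arcs $c$ are determined by the set of their $2s+1$ arc positions, since each vertex has exactly one loop and one outgoing arc. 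Grouping the same walks by the number $m$ of $d$-loops gives the summands $\binom{n-s-m-1}{s}\binom{m+s}{m}$.

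The two routes buy different things. Yours is a self-contained combinatorial explanation that needs only the count $K_{(r,s,m)}$ from the proof of Theorem~\ref{lem:power}. The paper's is shorter, but it rests on the external closed form \eqref{eq:zn} for Williams's and Mc Laughlin's $z_n$.

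Your algebraic fallback follows the paper's idea of specializing entries and comparing coefficients, but it is more elementary. Writing $\smatr{1}{t}{t}{1}=I+tJ$ with $J^2=I$ yields both identities at once from the even and odd parts of $(1+t)^n$, with no appeal to \eqref{eq:zn}.
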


\begin{proof}
The first equality was proved in Theorem~\ref{prop:2n=binoms} in \eqref{eq:ci.inproof}. The second one can be proved analogously but also in the following way.
Consider the matrix $A=\smatr{1}{b}{1}{1}$, it has trace $T=2$ and determinant $D=1-b$. Denote its $n$-th power by $A^n=\smatr{a_n}{b_n}{c_n}{d_n}$.
We compare the expressions of $b_n$ in Theorem~\ref{lem:power} and \eqref{eq:An,WM}.
By Theorem~\ref{lem:power} we have that
\begin{equation}\label{eq:Th.bn}
b_{n}=\sum_{s=0}^{\lfloor\frac{n-1}{2}\rfloor}\sum_{m=0}^{n-2s-1}\binom{n-s-m-1}{s}\binom{m+s}{m} b^{s+1}.
\end{equation}
On the other hand, by \eqref{eq:An,WM} we have that
\begin{align}
b_{n}&=bz_n=\frac{b}{2^{n-1}}\sum_{s=0}^{\lfloor\frac{n-1}{2}\rfloor}\binom{n}{2s+1}2^{n-2s-1}\bigl(4-4(1-b)\bigr)^s\notag\\
&=\sum_{s=0}^{\lfloor\frac{n-1}{2}\rfloor}\binom{n}{2s+1}b^{s+1}.\label{eq:W.bn}
\end{align}
Equating the coefficients of $b^{s+1}$ in \eqref{eq:Th.bn} and \eqref{eq:W.bn} we obtain the result.
\end{proof}

\begin{corollary}
\begin{multline*}
\sum_{m=0}^{n-2s}\binom{n-s-m}{s}\binom{m+s}{m}=\\
\sum_{m=0}^{n-2s-1}\Bigl[\binom{n-s-m-1}{s}\binom{m+s}{m}+\binom{n-s-m}{s}\binom{m+s-1}{m}\Bigr]+\binom{n-s-1}{n-2s}.
\end{multline*}
\end{corollary}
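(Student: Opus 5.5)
The plan is to prove the identity directly from Pascal's rule applied to the \emph{second} binomial factor on the left-hand side, $\binom{m+s}{m}$. The first factor $\binom{n-s-m}{s}$ is left untouched. Throughout, fix $n$ and $s$ with $1\leq s\leq\lfloor\frac{n}{2}\rfloor$, which is the range in which the coefficients of Theorem~\ref{lem:power} appear.

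\textbf{Step 1: split each term.} For every $0\leq m\leq n-2s$ write
\begin{equation*}
\binom{m+s}{m}=\binom{m+s-1}{m}+\binom{m+s-1}{m-1},
\end{equation*}
with the convention $\binom{s-1}{-1}=0$ at $m=0$. This splits the left-hand side into
\begin{equation*}
\Sigma_1=\sum_{m=0}^{n-2s}\binom{n-s-m}{s}\binom{m+s-1}{m}
\quad\text{and}\quad
\Sigma_2=\sum_{m=1}^{n-2s}\binom{n-s-m}{s}\binom{m+s-1}{m-1}.
\end{equation*}

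\textbf{Step 2: treat $\Sigma_1$.} Peel off the top term $m=n-2s$ of $\Sigma_1$. It equals
\begin{equation*}
\binom{s}{s}\binom{n-s-1}{n-2s}=\binom{n-s-1}{n-2s},
\end{equation*}
which is exactly the isolated term on the right-hand side. What remains of $\Sigma_1$ is
\begin{equation*}
\sum_{m=0}^{n-2s-1}\binom{n-s-m}{s}\binom{m+s-1}{m},
\end{equation*}
the second summand inside the brackets.

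\textbf{Step 3: treat $\Sigma_2$.} Reindex $\Sigma_2$ by $m'=m-1$, so that $m'$ runs from $0$ to $n-2s-1$ and the general term becomes
\begin{equation*}
\binom{n-s-m'-1}{s}\binom{m'+s}{m'}.
\end{equation*}
This is precisely the first summand inside the brackets. Adding the results of Steps 2 and 3 gives the right-hand side.

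There is no genuine obstacle here; the only care needed is bookkeeping at the endpoints. One must check that the $m=0$ term of $\Sigma_2$ really vanishes, and that the boundary term $m=n-2s$ is correctly identified with $\binom{n-s-1}{n-2s}$ (this uses $\binom{s}{s}=1$).

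As a sanity check, the identity also has a walk-counting meaning in the spirit of the proof of Theorem~\ref{lem:power}. The left-hand side counts closed walks at vertex~1 in which the $d$'s may also sit in one extra slot. Pascal's rule then separates two cases: either no $d$ occupies the last slot, or one $d$ is removed from it, which shortens the walk by one and produces the $b_n$-type coefficients. This interpretation is not needed for the proof.
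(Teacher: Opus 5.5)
Your proof is correct, and it takes a different route from the paper's.

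\textbf{The paper's route.} The paper never touches the summands. It quotes Corollary~\ref{cor:2.2s} to evaluate the three pieces as binomial coefficients:
\begin{itemize}
\item the left-hand side is the second identity there with $n$ replaced by $n+1$, so it equals $\binom{n+1}{2s+1}$;
\item the first sum in the brackets equals $\binom{n}{2s+1}$;
\item the second sum in the brackets, together with the isolated term $\binom{n-s-1}{n-2s}$ (the $m=n-2s$ summand of the first identity), equals $\binom{n}{2s}$.
\end{itemize}
The statement is then just Pascal's rule $\binom{n+1}{2s+1}=\binom{n}{2s+1}+\binom{n}{2s}$.

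\textbf{Your route.} You apply Pascal's rule to the inner factor $\binom{m+s}{m}$ term by term. Your endpoint bookkeeping is right, including the degenerate case $n=2s$: the omitted $m=0$ term of $\Sigma_2$, the top term $\binom{s}{s}\binom{n-s-1}{n-2s}$ of $\Sigma_1$, and the shift $m'=m-1$.

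\textbf{What each buys.} The paper's argument pins each piece to a closed form, so it shows in addition that both sides equal $\binom{n+1}{2s+1}$. However, it depends on Corollary~\ref{cor:2.2s}, whose proof uses walk counting and comparison with Williams' formula. Your argument is self-contained and purely formal, and it shows the corollary contains nothing beyond Pascal's rule.

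It even runs in the other direction. Let $S_1(n,s)$ and $S_2(n,s)$ denote the two sums in Corollary~\ref{cor:2.2s}. The statement reads $S_2(n+1,s)=S_2(n,s)+S_1(n,s)$. Combine this with the first identity there and induct on $n$, starting from $S_2(2s,s)=0=\binom{2s}{2s+1}$. This recovers the second identity without any matrix formula.
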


\begin{proof}
Follows from Corollary~\ref{cor:2.2s} and the identity $\binom{n+1}{2s+1}=\binom{n}{2s+1}+\binom{n}{2s}$.
\end{proof}

\subsection{Combinatorial identities from $\trace S^kA$}

By Theorem~\ref{thm:yx=sk} we have four ways to compute $\trace S^kA$.

\begin{corollary}\label{cor:Sk.id}
Given $A=\smatr{a}{b}{c}{d}$ we have that
\begin{align*}
\trace S^kA&=\sum_{i=0}^{\lfloor \frac{k}{2}\rfloor}(-1)^i\binom{k-i}{i}(a+d)^{k-2i}(ad-bc)^i\\
&=\sum_{j=0}^{k}\sum_{i=0}^{\min\{k-j,j\}}\binom{k-j}{i}\binom{j}{j-i} a^{k-j-i}b^{i}c^{i}d^{j-i}\\
&=\sum_{s=0}^{\lfloor\frac{k}{2}\rfloor}\sum_{m=0}^{k-2s}\binom{k-s-m}{s}\binom{m+s}{m} a^{k-2s-m}b^{s}c^{s}d^{m}\\
&=\tfrac{1}{2}\biggl(\frac{a+d}{2}\biggr)^{k}\tfrac{\bigl(1+\sqrt{1-4(ad-bc)/(a+d)^2}\bigr)^{k+1}-\bigl(1-\sqrt{1-4(ad-bc)/(a+d)^2}\bigr)^{k+1}}{\sqrt{1-4(ad-bc)/(a+d)^2}}.
\end{align*}
\end{corollary}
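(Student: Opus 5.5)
The plan is to obtain each of the four expressions from a result already established in the paper, and to tie all of them to $\trace S^kA$.

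\textbf{First line.} This is Theorem~\ref{thm:yx=sk}: $\trace S^kA=y_k$. Substitute $T=a+d$ and $D=ad-bc$ into \eqref{eq:yk} and write $(-D)^i=(-1)^i(ad-bc)^i$.

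\textbf{Second line.} This is exactly Proposition~\ref{lem:charma}, so nothing more is needed.

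\textbf{Third line.} This is the delicate step, because it must hold for all $b,c$, including $b=0$ or $c=0$. The proof of Theorem~\ref{thm:Ak.trSk} refers forward to this identity in the case $c=0$, so I must not use Theorem~\ref{thm:Ak.trSk} here. I will argue instead via polynomial identities.
\begin{itemize}
\item Take the formula for $b_{k+1}$ in Theorem~\ref{lem:power} with $n=k+1$. Every monomial there contains the factor $b^{s+1}$, so $b_{k+1}=b\,Q$ with $Q\in\mathbb{Z}[a,b,c,d]$.
\item The exponents become $a^{k-2s-m}b^{s}c^{s}d^{m}$, the coefficients become $\binom{k-s-m}{s}\binom{m+s}{m}$, and the limits become $0\le s\le\lfloor k/2\rfloor$, $0\le m\le k-2s$. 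Hence $Q$ is precisely the third displayed sum.
\item Theorem~\ref{thm:fk.sk} gives $\trace S^kA=b_{k+1}/b=Q$ on the Zariski-dense set $\{b\neq0,c\neq0\}$ of $\C^4$.
\item Both $\trace S^kA$ (by Proposition~\ref{lem:charma}) and $Q$ are polynomials in $a,b,c,d$. Since they agree on a dense set, they are equal identically. This also settles the degenerate cases.
\end{itemize}
The one point to check carefully is the index bookkeeping when passing from $n$ to $k+1$.

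\textbf{Fourth line.} Substitute $T=a+d$ and $D=ad-bc$ into Larcombe's closed form \eqref{eq:yk.lar} for $y_k$ and use Theorem~\ref{thm:yx=sk} once more. This expression only makes sense where $a+d\neq0$. I will note that it is an identity of formal expressions, valid wherever it is defined; it is in fact symmetric in the square root, hence a polynomial after expansion.
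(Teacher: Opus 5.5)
Your proposal is correct and follows the paper's route. The first, second and fourth lines come from Theorem~\ref{thm:yx=sk} with \eqref{eq:yk} and \eqref{eq:yk.lar}, and from Proposition~\ref{lem:charma}; the third line, which the paper's one-line proof leaves implicit, is $b_{k+1}/b$ from Theorem~\ref{lem:power} identified with $\trace S^kA$ via Theorem~\ref{thm:fk.sk}. Your polynomial-identity argument extending the third line to $b=0$ or $c=0$ is a real gain in rigor, since it avoids the circular forward reference in the proof of Theorem~\ref{thm:Ak.trSk}; the same density argument applied to the $bc\neq0$ case of Theorem~\ref{thm:yx=sk} would also make the first line independent of the degenerate-case discussion.
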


\begin{proof}
It follows by Theorem~\ref{thm:yx=sk}, Proposition~\ref{lem:charma}, \eqref{eq:yk} and \eqref{eq:yk.lar}.
\end{proof}

\subsection{Formulae for the Fibonacci numbers}

From Corollary~\ref{cor:Sk.id} we can get many combinatorial identities taking specific matrices $A$.
For instance, let $\{F_k\}_{k=0}^\infty$ denote the Fibonacci sequence, defined by $F_0=0$, $F_1=1$, $F_{k+1}=F_k+F_{k-1}$, for $k\geq1$. Let $A=\smatr{1}{1}{1}{0}$, then \cite[Theorem~32.1]{Koshy:FLNA}
\begin{equation}
 A^k=\begin{pmatrix}
      F_{k+1} & F_k\\
      F_k & F_{k-1}
     \end{pmatrix}
\end{equation}
and by  \eqref{eq:Ak.TSk} and substituting in the equalities of Corollary~\ref{cor:Sk.id} we recover the identities
\begin{equation}\label{eq:Fibonacci}
 F_{k}=\trace S^{k-1}A=\sum_{s=0}^{\lfloor\frac{k-1}{2}\rfloor}\binom{k-s-1}{s}=\frac{1}{\sqrt{5}}\Biggl[\biggl(\frac{1+\sqrt{5}}{2}\biggr)^{k}-\biggl(\frac{1-\sqrt{5}}{2}\biggr)^{k}\Biggr]
\end{equation}
given in \cite[Chapter~1 \S15]{Vorobev:FibonacciNumbers} and Binet's formula \cite[(1.20)]{Vorobev:FibonacciNumbers}. Also the entry $d_k$ in \eqref{eq:Ak.TSk} gives back the recursive relation $F_{k-1}=F_{k+1}-F_k$.

% This is just to record the argument
% For the second equality, since $d=0$, the non-zero terms are when $i=j$. So we have
% \begin{equation*}
% 0\leq i\leq \min\{k-j-1,j\}\Rightarrow 0\leq i\leq k-j-1\Rightarrow 0\leq k-j-i-1=k-2j-1,
% \end{equation*}
% thus we have $j\leq \frac{k-1}{2}$. Therefore we have
% \begin{equation*}
% \sum_{s=0}^{\lfloor\frac{k-1}{2}\rfloor}\binom{k-s-1}{s}.
% \end{equation*}

The important new observation is the first equality in \eqref{eq:Fibonacci}: \textbf{the $k$-th Fibonacci number is the trace of the matrix $S^{k-1}A$ and therefore it is invariant under conjugation of $A$} (see Remark~\ref{rem:inv.conj}). Let $B=\smatr{a}{b}{c}{d}$ a non-singular matrix, that is $ad-bc\neq0$. Conjugate $A=\smatr{1}{1}{1}{0}$ by $B$ obtaining
\begin{equation}
 C=BAB^{-1}=\begin{pmatrix}
             \frac{ad+bd-ac}{ad-bc} & \frac{a^2-b^2-ab}{ad-bc}\\
             \frac{d^2-c^2+cd}{ad-bc} & \frac{ac-bc-bd}{ad-bc}
            \end{pmatrix}.
\end{equation}
Then, by Corollary~\ref{cor:Sk.id} and \eqref{eq:Fibonacci} we get

\begin{corollary}
Let $a,b,c,d\in\mathbb{C}$ such that $ad-bc\neq0$. Then
\begin{multline}\label{eq:fibi.infty}
F_{k}=\frac{1}{(ad-bc)^{k-1}}\sum_{s=0}^{\lfloor\frac{k-1}{2}\rfloor}\sum_{m=0}^{k-2s-1}\binom{k-s-m-1}{s}\binom{m+s}{m}\cdot\\
(ad+bd-ac)^{k-2s-m-1}(a^2-b^2-ab)^s(d^2-c^2+cd)^s(ac-bc-bd)^m.
\end{multline}
\end{corollary}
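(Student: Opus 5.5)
The argument combines two facts already established: the Fibonacci number is a trace of a symmetric power, and that trace is invariant under conjugation. Concretely, I would set $A=\smatr{1}{1}{1}{0}$ and use \eqref{eq:Fibonacci}, which gives $F_k=\trace S^{k-1}A$. For any non-singular $B=\smatr{a}{b}{c}{d}$, the map $S^{k-1}$ is multiplicative, so $S^{k-1}(BAB^{-1})=S^{k-1}B\,S^{k-1}A\,(S^{k-1}B)^{-1}$. Hence $\trace S^{k-1}C=\trace S^{k-1}A=F_k$ for $C=BAB^{-1}$. This is exactly the conjugation invariance recorded in Remark~\ref{rem:inv.conj}.

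Next I would evaluate $\trace S^{k-1}C$ with the third expression of Corollary~\ref{cor:Sk.id}, taking $k-1$ in place of $k$. This gives
\[
\trace S^{k-1}C=\sum_{s=0}^{\lfloor\frac{k-1}{2}\rfloor}\sum_{m=0}^{k-2s-1}\binom{k-s-m-1}{s}\binom{m+s}{m}\,C_{11}^{\,k-2s-m-1}C_{12}^{\,s}C_{21}^{\,s}C_{22}^{\,m},
\]
where $C_{ij}$ are the entries of $C$. I would then substitute the explicit entries of $C=BAB^{-1}$ displayed just before the statement. To confirm them, multiply $B\smatr{1}{1}{1}{0}=\smatr{a+b}{a}{c+d}{c}$ on the right by $B^{-1}=\frac{1}{ad-bc}\smatr{d}{-b}{-c}{a}$. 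The resulting entries are $(ad+bd-ac)$, $(a^2-b^2-ab)$, $(d^2-c^2+cd)$ and $(ac-bc-bd)$, each divided by $ad-bc$.

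Each monomial in the sum has total degree $(k-2s-m-1)+2s+m=k-1$, so every term carries the same factor $(ad-bc)^{-(k-1)}$. Pulling that factor out of the double sum yields \eqref{eq:fibi.infty}.

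The only step that needs care is checking the entries of $C$ and confirming that the homogeneity degree is $k-1$. The rest is direct substitution into earlier results, so I expect no real obstacle.
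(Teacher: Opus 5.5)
Your proposal is correct and follows the paper's own route. You start from $F_k=\trace S^{k-1}A$ in \eqref{eq:Fibonacci} and use conjugation invariance of the trace. You then evaluate the third expression of Corollary~\ref{cor:Sk.id} (with $k-1$ in place of $k$) at the entries of $C=BAB^{-1}$ and pull out $(ad-bc)^{-(k-1)}$ by homogeneity. Your verification of the entries of $C$ and of the multiplicativity of $S^{k-1}$ fills in details the paper leaves implicit.
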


In this way we get identities for the Fibonacci numbers, one for each invertible matrix $B=\smatr{a}{b}{c}{d}$.
% For instance, taking $B=\smatr{1}{1}{1}{2}$ we get $C=BAB^{-1}=\smatr{3}{-1}{5}{-2}$ and substituting in \eqref{eq:fibi.infty}
% \begin{equation*}
% F_{k}=\sum_{s=0}^{\lfloor\frac{k-1}{2}\rfloor}\sum_{m=0}^{k-2s-1}\binom{k-s-m-1}{s}\binom{m+s}{m}(-1)^{m+s}\cdot3^{k-2s-m-1}\cdot5^s\cdot2^m.
% \end{equation*}

You can obtain a formula in terms of your favorite non-zero complex number $\eta\neq0$ using for instance $B=\smatr{\eta}{1}{0}{1}$,
we get $C=BAB^{-1}=\frac{1}{\eta}\smatr{\eta+1}{\eta^2-\eta-1}{1}{-1}$ and by \eqref{eq:fibi.infty} we obtain the following corollary (compare with \cite[Corollary~2]{McLaughlin:CIDnP2x2M}).

\begin{corollary}
Let $\eta$ be any non-zero complex number. Then
\begin{equation}\label{eq:Fn.eta}
F_{k}=\frac{1}{\eta^{k-1}}\sum_{s=0}^{\lfloor\frac{k-1}{2}\rfloor}\sum_{m=0}^{k-2s-1}\tbinom{k-s-m-1}{s}\tbinom{m+s}{m}(-1)^{m}(\eta+1)^{k-2s-m-1}(\eta^2-\eta-1)^s.
\end{equation}
\end{corollary}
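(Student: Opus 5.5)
The plan is to specialise the previous corollary, equation \eqref{eq:fibi.infty}, to the conjugating matrix $B=\smatr{\eta}{1}{0}{1}$. This matrix is invertible because its determinant is $\eta\neq0$.

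\textbf{Step 1: conjugate.} First compute $C=BAB^{-1}$ directly as a check. We have $B^{-1}=\frac{1}{\eta}\smatr{1}{-1}{0}{\eta}$ and $BA=\smatr{\eta+1}{\eta}{1}{0}$. Hence
\[
C=\frac{1}{\eta}\smatr{\eta+1}{\eta^2-\eta-1}{1}{-1},
\]
which agrees with the text.

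\textbf{Step 2: substitute into \eqref{eq:fibi.infty}.} Put $a=\eta$, $b=1$, $c=0$, $d=1$, so that $ad-bc=\eta$. The four entries of $(ad-bc)C$ appearing in \eqref{eq:fibi.infty} become:
\begin{itemize}
\item $ad+bd-ac=\eta+1$;
\item $a^2-b^2-ab=\eta^2-\eta-1$;
\item $d^2-c^2+cd=1$;
\item $ac-bc-bd=-1$.
\end{itemize}
The prefactor is $1/(ad-bc)^{k-1}=1/\eta^{k-1}$. Substituting, the summand becomes
\[
\tbinom{k-s-m-1}{s}\tbinom{m+s}{m}(\eta+1)^{k-2s-m-1}(\eta^2-\eta-1)^s\,1^s\,(-1)^m,
\]
which is exactly \eqref{eq:Fn.eta}.

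\textbf{Why \eqref{eq:fibi.infty} applies.} That equation comes from writing $F_k=\trace S^{k-1}A=\trace S^{k-1}C$ by conjugation invariance. One then uses the third expression in Corollary~\ref{cor:Sk.id}, which requires no hypothesis on $b,c$. Applying it to $C=\frac1\eta C'$ contributes the homogeneous factor $\eta^{-(k-1)}$, since each monomial has total degree $k-1$. So \eqref{eq:fibi.infty} holds for every invertible $B$, including this one where $c=0$.

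\textbf{Main obstacle.} The only delicate point is the index bookkeeping with $k-1$ in place of $k$ in Corollary~\ref{cor:Sk.id}. It gives the upper limits $\lfloor\frac{k-1}{2}\rfloor$ and $k-2s-1$ and the binomial $\binom{k-1-s-m}{s}$. One must check that the $b$-power and $c$-power are both $s$, so the factor $1^s$ from the $(2,1)$ entry drops out. Everything else is routine substitution.
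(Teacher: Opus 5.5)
Your proposal is correct and is exactly the paper's argument: substitute $B=\smatr{\eta}{1}{0}{1}$ (so $a=\eta$, $b=1$, $c=0$, $d=1$, $ad-bc=\eta$) into \eqref{eq:fibi.infty}, and your computations of $C$ and of the four entries are right. One small remark: the $c=0$ of $B$ is not a hypothesis of \eqref{eq:fibi.infty}, which holds for any invertible $B$. The case that actually deserves your care is when the $(1,2)$ entry $(\eta^2-\eta-1)/\eta$ of $C$ vanishes (e.g.\ $\eta=\phi$); there Theorem~\ref{thm:fk.sk} does not apply directly, and the case is covered because the third expression in Corollary~\ref{cor:Sk.id} holds for every matrix.
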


For example, if $\eta=\pi$ we get
\begin{equation*}
F_{k}=\frac{1}{\pi^{k-1}}\sum_{s=0}^{\lfloor\frac{k-1}{2}\rfloor}\sum_{m=0}^{k-2s-1}\tbinom{k-s-m-1}{s}\tbinom{m+s}{m}(-1)^{m}(\pi+1)^{k-2s-m-1}(\pi^2-\pi-1)^s.
\end{equation*}

One special case is of course when $\eta=\phi=\frac{1+\sqrt{5}}{2}\approx1.618033$, the golden ratio.

\begin{corollary}
\begin{equation*}
F_{k}=\sum_{m=0}^{k-1}(-1)^{m}\phi^{k-2m-1}.
\end{equation*}
\end{corollary}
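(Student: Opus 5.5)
The plan is to specialise the previous corollary, namely \eqref{eq:Fn.eta}, to $\eta=\phi$ and use the defining equation of the golden ratio to collapse the double sum. Since $\phi\neq0$, the formula \eqref{eq:Fn.eta} applies with $\eta=\phi$. The key observation is that $\phi$ is a root of $x^2-x-1=0$, so $\eta^2-\eta-1=\phi^2-\phi-1=0$.

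First I will kill all terms with $s\geq1$. The factor $(\eta^2-\eta-1)^s$ vanishes for every $s\geq1$. For $s=0$ it equals $1$, with the usual convention $0^0=1$. This convention is forced: the formula comes from Theorem~\ref{lem:power} as a polynomial identity, where the $s=0$ terms carry no power of $b$ and $c$. So only the $s=0$ slice survives. There $\binom{k-m-1}{0}\binom{m}{m}=1$ for $0\leq m\leq k-1$, and we get
\begin{equation*}
F_k=\frac{1}{\phi^{k-1}}\sum_{m=0}^{k-1}(-1)^m(\phi+1)^{k-m-1}.
\end{equation*}

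Next I use the relation $\phi+1=\phi^2$, which again comes from $\phi^2-\phi-1=0$. It gives $(\phi+1)^{k-m-1}=\phi^{2k-2m-2}$. Dividing by $\phi^{k-1}$ turns each term into $(-1)^m\phi^{k-2m-1}$, which is the claimed formula.

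There is no real obstacle here. The only point needing care is justifying the $0^0=1$ convention at $s=0$. As a sanity check, one can sum the geometric series $\phi^{k-1}\sum_{m}(-\phi^{-2})^m$ and use $-\phi^{-1}=(1-\sqrt5)/2$. This recovers Binet's formula in \eqref{eq:Fibonacci}, which gives an independent confirmation.
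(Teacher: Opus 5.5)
Your proposal is correct and follows the paper's proof exactly. You specialise \eqref{eq:Fn.eta} to $\eta=\phi$, use $\phi^2-\phi-1=0$ to discard all $s\geq1$ terms, and then rewrite $(\phi+1)^{k-m-1}=\phi^{2(k-m-1)}$; your remarks on the $0^0=1$ convention at $s=0$ and the geometric-series check against Binet's formula are sound additions the paper omits.
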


\begin{proof}
The number $\phi$ satisfies the equation
\begin{equation}\label{eq:phi.eq}
 \phi^2-\phi-1=0,
\end{equation}
so the non-zero terms in
\eqref{eq:Fn.eta} are when $s=0$ obtaining
\begin{align*}
F_{k}&=\frac{1}{\phi^{k-1}}\sum_{m=0}^{k-1}(-1)^{m}(\phi+1)^{k-m-1}\\
&=\frac{1}{\phi^{k-1}}\sum_{m=0}^{k-1}(-1)^{m}\phi^{2(k-m-1)}\qquad\text{using \eqref{eq:phi.eq}}\\
&=\sum_{m=0}^{k-1}(-1)^{m}\phi^{k-2m-1}.\qedhere
\end{align*}
\end{proof}

Let $A=\smatr{1}{1}{1}{2}$, then \cite[Exercise~32.19]{Koshy:FLNA}
\begin{equation}
 A^k=\begin{pmatrix}
      F_{2k-1} & F_{2k}\\
      F_{2k} & F_{2k+1}
     \end{pmatrix}
\end{equation}
Then by Theorem~\ref{lem:power} we have
\begin{corollary}
\begin{equation*}
 F_{2k}=\sum_{s=0}^{\lfloor\frac{k-1}{2}\rfloor}\sum_{m=0}^{k-2s-1}\binom{k-s-m-1}{s}\binom{m+s}{m}2^m.
\end{equation*}
\end{corollary}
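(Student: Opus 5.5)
The plan is to specialise the formula for $b_n$ in Theorem~\ref{lem:power} to the matrix $A=\smatr{1}{1}{1}{2}$, so that $a=b=c=1$ and $d=2$. First I would justify the cited identity $A^k=\smatr{F_{2k-1}}{F_{2k}}{F_{2k}}{F_{2k+1}}$ by induction on $k$. For $k=1$ we have $A=\smatr{F_1}{F_2}{F_2}{F_3}$. For the inductive step I would multiply $A^{k}A$ using the recursions \eqref{1}--\eqref{4}. This gives $b_{k+1}=a_k+2b_k=F_{2k-1}+2F_{2k}=F_{2k+1}+F_{2k}=F_{2k+2}$, and the other three entries work out the same way from $F_{j+1}=F_j+F_{j-1}$. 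Alternatively, I would simply note that $A=\smatr{1}{1}{1}{0}^2$ and combine this with \cite[Theorem~32.1]{Koshy:FLNA}, giving the result immediately.

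Next I would read off the upper right entry. On one hand $b_k=F_{2k}$. On the other hand, Theorem~\ref{lem:power} gives
\begin{equation*}
b_k=\sum_{s=0}^{\lfloor\frac{k-1}{2}\rfloor}\sum_{m=0}^{k-2s-1}\binom{k-s-m-1}{s}\binom{m+s}{m}a^{k-2s-m-1}b^{s+1}c^{s}d^{m}.
\end{equation*}
Substituting $a=b=c=1$ and $d=2$ turns every monomial into $2^m$, which yields exactly the stated double sum.

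There is no real obstacle here. The only point needing care is the indexing: the theorem is stated for $A^n$ with $b_n$, and we must apply it with $n=k$, so the summation bounds $\lfloor\frac{k-1}{2}\rfloor$ and $k-2s-1$ match the statement. I would also note that the result holds for all $k\geq1$, since Theorem~\ref{lem:power} is valid for every $n\geq1$.
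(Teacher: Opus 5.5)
Your proposal is correct and follows the paper's argument exactly: take $A=\smatr{1}{1}{1}{2}$, use $b_k=F_{2k}$, and evaluate the formula for $b_n$ in Theorem~\ref{lem:power} at $a=b=c=1$, $d=2$. Your induction for $A^k$ is also correct and supplies what the paper only cites from Koshy.

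One slip is in your side remark. In fact $\smatr{1}{1}{1}{0}^2=\smatr{2}{1}{1}{1}\neq A$; the correct square root is $\smatr{0}{1}{1}{1}$, whose $n$-th power is $\smatr{F_{n-1}}{F_n}{F_n}{F_{n+1}}$. Your induction already establishes the identity, so nothing depends on this.
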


\subsection{Formulae for Fibonacci and Lucas numbers}

Let $\{L_k\}_{k=0}^\infty$ denote the Lucas sequence, defined by $L_0=2$, $L_1=1$, $L_{k+1}=L_k+L_{k-1}$, for $k\geq1$. Let $A=\smatr{\frac{1}{2}}{\frac{5}{2}}{\frac{1}{2}}{\frac{1}{2}}$, then \cite[Corollary~2]{Demirturk:FLSMM}
\begin{equation*}
A^k=\begin{pmatrix}
      \frac{1}{2}L_{k} & \frac{5}{2}F_{k}\\[3pt]
      \frac{1}{2}F_{k} & \frac{1}{2}L_{k}
     \end{pmatrix}
\end{equation*}
We get the following expression for $F_k$.
\begin{corollary}\label{cor:YAFk}
\begin{equation*}
F_{k}=\frac{1}{2^{k-1}}\sum_{s=0}^{\lfloor\frac{k-1}{2}\rfloor}\sum_{m=0}^{k-2s-1}\binom{k-s-m-1}{s}\binom{m+s}{m}5^s
\end{equation*}
\end{corollary}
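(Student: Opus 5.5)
The plan is to apply the formula for $b_n$ in Theorem~\ref{lem:power} to the matrix $A=\smatr{\frac12}{\frac52}{\frac12}{\frac12}$, for which $A^k$ has the displayed form with $b_k=\frac52F_k$.

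\textbf{Substituting the entries.} Take $n=k$ with $a=d=\frac12$, $b=\frac52$ and $c=\frac12$. A general monomial of $b_k$ is
\begin{equation*}
a^{k-2s-m-1}b^{s+1}c^sd^m=\Bigl(\tfrac12\Bigr)^{k-2s-m-1}\Bigl(\tfrac52\Bigr)^{s+1}\Bigl(\tfrac12\Bigr)^{s}\Bigl(\tfrac12\Bigr)^{m}.
\end{equation*}

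\textbf{Collecting powers of $2$.} The powers of $\frac12$ contribute a total exponent
\begin{equation*}
(k-2s-m-1)+(s+1)+s+m=k.
\end{equation*}
So each monomial equals $\frac{5^{s+1}}{2^{k}}=\frac{5}{2}\cdot\frac{5^s}{2^{k-1}}$, which depends on $s$ only.

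\textbf{Comparing with $b_k=\frac52F_k$.} Theorem~\ref{lem:power} therefore gives
\begin{equation*}
\tfrac52F_k=b_k=\tfrac52\cdot\frac{1}{2^{k-1}}\sum_{s=0}^{\lfloor\frac{k-1}{2}\rfloor}\sum_{m=0}^{k-2s-1}\binom{k-s-m-1}{s}\binom{m+s}{m}5^s.
\end{equation*}
Cancelling the factor $\frac52$ yields the corollary.

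The only step that needs an independent check is the input identity $A^k=\smatr{\frac12L_k}{\frac52F_k}{\frac12F_k}{\frac12L_k}$, quoted from \cite{Demirturk:FLSMM}. If I want it self-contained, I would prove it by induction using \eqref{2}: assuming the formula for $A^{k-1}$,
\begin{equation*}
b_k=\tfrac52\cdot\tfrac12L_{k-1}+\tfrac12\cdot\tfrac52F_{k-1}=\tfrac54\,(L_{k-1}+F_{k-1})=\tfrac52F_k,
\end{equation*}
using the standard relation $L_{k-1}+F_{k-1}=2F_k$. The diagonal entries follow in the same way from \eqref{1} and \eqref{4}, using $L_{k-1}+5F_{k-1}=2L_k$.

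As a cross-check, Corollary~\ref{cor:2.2s} turns the inner sum into $\binom{k}{2s+1}$. This gives $F_k=2^{1-k}\sum_{s}\binom{k}{2s+1}5^s$, which is the binomial expansion of Binet's formula and confirms the result.
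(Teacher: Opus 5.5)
Your proposal is correct and follows the paper's own proof. The paper gives it as a one-line reference to the $b_n$ formula of Theorem~\ref{lem:power}, applied to $A=\smatr{\frac12}{\frac52}{\frac12}{\frac12}$, with every monomial collapsing to $\frac52\cdot 5^s/2^{k-1}$ so that $\frac52$ cancels against $b_k=\frac52F_k$; it alternatively points to the third equality of Corollary~\ref{cor:Sk.id} via $\trace S^{k-1}A=b_k/b$. Your inductive check of the quoted form of $A^k$ and your Binet cross-check are correct additions that the paper does not include.
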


\begin{proof}
Follows from the third equality in Corollary~\ref{cor:Sk.id} or the second equality in Theorem~\ref{lem:power}.
\end{proof}

We also obtain the following expressions for $L_k$, the second one is given in equation (92) in \cite[p.~69]{Vajda:FLNGS}.
\begin{corollary}\label{cor:Lk}
\begin{align*}
L_k&=\frac{1}{2^{k-1}}+\frac{1}{2^{k-1}}\sum_{s=1}^{\lfloor\frac{k}{2}\rfloor}\sum_{m=0}^{k-2s}\binom{k-s-m}{s}\binom{m+s-1}{m}5^s.\\
&=\frac{1}{2^{k-1}}\sum_{s=0}^{\lfloor\frac{k}{2}\rfloor}\binom{k}{2s}5^s.
\end{align*}
\end{corollary}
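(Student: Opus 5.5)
The plan is to use the matrix $A=\smatr{\frac{1}{2}}{\frac{5}{2}}{\frac{1}{2}}{\frac{1}{2}}$ from the previous corollary, whose powers satisfy $a_k=\tfrac12 L_k$, and to evaluate $a_k$ in two ways: via the formula of Theorem~\ref{lem:power}, and via the first equality of Corollary~\ref{cor:2.2s}.

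\textbf{First expression.} Substitute $a=d=\frac12$, $b=\frac52$, $c=\frac12$ into the formula for $a_n$ in Theorem~\ref{lem:power} with $n=k$. Each monomial $a^{k-2s-m}b^sc^sd^m$ becomes
\[
\bigl(\tfrac12\bigr)^{k-2s-m}\bigl(\tfrac54\bigr)^s\bigl(\tfrac12\bigr)^m=\frac{5^s}{2^{k-2s}\,4^s}=\frac{5^s}{2^k}.
\]
Hence
\[
\tfrac12 L_k=a_k=\frac{1}{2^k}+\frac{1}{2^k}\sum_{s=1}^{\lfloor k/2\rfloor}\sum_{m=0}^{k-2s}\binom{k-s-m}{s}\binom{m+s-1}{m}5^s,
\]
and multiplying by $2$ gives the first line of the corollary.

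\textbf{Second expression.} Apply the first identity of Corollary~\ref{cor:2.2s} (that is, \eqref{eq:ci.inproof}) to collapse the inner sum over $m$ to $\binom{k}{2s}$. The isolated term $\frac{1}{2^{k-1}}$ is exactly the $s=0$ term $\binom{k}{0}5^0$, so the sum can start at $s=0$. This gives the second line.

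The only points needing care are bookkeeping. First, I would confirm the power-of-two cancellation, since $b\cdot c=\frac54$ contributes a factor $4^{-s}$ that must offset the $2^{2s}$ missing from $a^{k-2s-m}d^m$. Second, I would cite the identity $A^k=\smatr{\frac12 L_k}{\frac52 F_k}{\frac12 F_k}{\frac12 L_k}$ as already given in the paper.

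If one preferred not to rely on that citation, a check via \eqref{eq:An,WM} would also work. Here $T=1$ and $D=-1$, so the eigenvalues are $\alpha,\beta=\tfrac{1\pm\sqrt5}{2}$ and $a_k=\tfrac12(\alpha^k+\beta^k)$. Binomial expansion of $\alpha^k+\beta^k$ gives the second line directly. I expect no real obstacle.
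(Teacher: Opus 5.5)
Your proposal is correct and matches the paper's proof: the first line comes from substituting $a=d=\frac12$, $b=\frac52$, $c=\frac12$ into the formula for $a_k$ in Theorem~\ref{lem:power}, where every monomial reduces to $5^s/2^k$ as you computed. The second line comes from collapsing the inner sum with the first identity of Corollary~\ref{cor:2.2s}; your eigenvalue cross-check via \eqref{eq:An,WM} is also valid but not needed.
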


\begin{proof}
The first equality follows from Theorem~\ref{lem:power}, and the second one using the first equality in Corollary~\ref{cor:2.2s}.
\end{proof}

\begin{remark}
Note that by Theorem~\ref{thm:fk.sk} the expression for $F_k$ in Corollary~\ref{cor:YAFk} is invariant under conjugation, and we could also obtain it from \eqref{eq:fibi.infty} taking $a=1$, $b=2$, $c=1$ and $d=0$. On the other hand, the expressions for $L_k$ in Corollary~\ref{cor:Lk} \textbf{are not} invariant under conjugation.
\end{remark}

\noindent\textbf{Acknowledgements} 
I would like to thank Prof.~Peter J. Larcombe for the enlightening private communications, for sharing his results with me, for making me aware of Mc Laughlin's article and his interest in my work.

% \bibliography{matart,mypapers,matbook,thesis}
% \bibliographystyle{plain}
\end{document}